\documentclass[12pt,reqno]{amsart}

\usepackage{amsmath,amssymb,latexsym,amscd,amsthm} 
\usepackage{graphicx}
\usepackage{fullpage}
\usepackage{color}
\usepackage[square, numbers]{natbib}   
\usepackage{fourier}
\usepackage{comment}
\usepackage[latin1]{inputenc}
\usepackage[T1]{fontenc}
 \usepackage{tikz}
 \usetikzlibrary{topaths,calc}
\newtheorem{theorem}{Theorem}[section]  
\newtheorem{corollary}[theorem]{Corollary}
\newtheorem{proposition}[theorem]{Proposition}
\newtheorem{claim}{Claim}
\newtheorem{lemma}[theorem]{Lemma}
\theoremstyle{definition}

\newtheorem*{question}{Question}

\theoremstyle{remark}
\newtheorem*{remark}{Remark}
\def\zero{{\bf 0}}
\reversemarginpar
\setlength{\marginparwidth}{20mm}

\title{A combinatorial approach to colourful simplicial depth}

\author{Antoine Deza}
\address{Advanced Optimization Laboratory, Department of Computing and Software, 
McMaster University, Hamilton, Ontario, Canada}
\email{deza@mcmaster.ca}

\author{Fr\'ed\'eric Meunier}
\address{Universit\'e Paris Est, CERMICS, 6-8 avenue Blaise Pascal, Cit\'e Descartes, 77455 Marne-la-Vall\'ee, Cedex 2, France}
\email{frederic.meunier@cermics.enpc.fr}

\author{Pauline Sarrabezolles}
\address{Universit\'e Paris Est, CERMICS, 6-8 avenue Blaise Pascal, Cit\'e Descartes, 77455 Marne-la-Vall\'ee, Cedex 2, France}
\email{pauline.sarrabezolles@cermics.enpc.fr}

\subjclass[2000]{05C65, 52C45, 52A35}

\keywords{Colourful {\cara} theorem, colourful simplicial depth, octahedral systems, realizability}

\date{\today}

\def\bara{B\'ar\'any}
\def\mato{Matou\v sek}
\def\cara{Carath\'eodory}

\def\R{\mathbb{R}}

\def\S{\mathbf{S}}

\def\conv{\operatorname{conv}}

\def\zero{{\bf 0}}

\begin{document}

\begin{abstract}
The colourful simplicial depth conjecture states that any point in the convex hull of each of $d+1$ sets, or colours, of $d+1$ points in general position in $\R^d$ is contained in at least $d^2+1$ simplices with one vertex from each set. We verify the conjecture in dimension 4 and strengthen the known lower bounds in higher dimensions. These results are obtained using a combinatorial generalization of colourful point configurations called octahedral systems. We present  properties of octahedral systems generalizing earlier results on colourful point configurations and exhibit an octahedral system which cannot arise from a colourful point configuration. The number of octahedral systems is also given.
\end{abstract}

\maketitle

\section{Introduction}
\subsection{Preliminaries}
An $n$-uniform hypergraph is said to be {\em $n$-partite} if its vertex set is the disjoint union of $n$ sets $V_1,\ldots,V_n$ and each edge intersects each $V_i$ at exactly one vertex. Such a hypergraph is an $(n+1)$-tuple $(V_1,\ldots,V_n,E)$ where $E$ is the set of edges.  An {\em octahedral system} $\Omega$ is an $n$-uniform $n$-partite hypergraph $(V_1,\ldots,V_n,E)$ with $|V_i|\geq 2$ for  $i=1,\ldots,n$ and satisfying the following {\em parity condition}: the number of edges of $\Omega$ induced by $X\subseteq\bigcup_{i=1}^nV_i$ is even if $|X\cap V_i|=2$ for $i=1,\ldots,n$.

A {\em colourful point configuration} in $\mathbb{R}^d$ is a collection of $d+1$ sets, or colours, 
$\mathbf{S}_1,\ldots,\mathbf{S}_{d+1}$. A {\em colourful simplex} is defined as the convex hull of a subset $S$ of $\bigcup_{i=1}^{d+1}\mathbf{S}_i$ with $|S\cap\mathbf{S}_i|=1$  for $i=1,\ldots,d+1$. The Octahedron Lemma~\cite{DHST06} states that, given a subset $X\subseteq\bigcup_{i=1}^{d+1}\mathbf{S}_i$ of points such that $|X\cap\mathbf{S}_i|=2$ for $i=1,\ldots,d+1$, there is an even number of colourful simplices generated by $X$ and containing the origin $\zero$. Therefore, the hypergraph $\Omega=(V_1,\ldots,V_{d+1},E)$, with $V_i=\mathbf{S}_i$ for $i=1,\ldots,d+1$ and where the edges in $E$ correspond to the colourful simplices containing $\zero$ forms an octahedral system. This property motivated {\bara} to suggest octahedral systems as a combinatorial generalization of colourful point configurations, see~\cite{DSX11}.

Let $\mu(d)$ denote the minimum number of colourful simplices containing $\zero$ over all colourful point configurations satisfying $\zero\in\bigcap_{i=1}^{d+1}\conv(\mathbf{S}_i)$ and $|\mathbf{S}_i|=d+1$ for $i=1,\ldots,d+1$. B\'ar\'any's colourful Carath\'eodory theorem~\cite{Bar82} states that $\mu(d)\geq 1$. The quantity $\mu(d)$ was investigated in~\cite{DHST06} where it is shown that $2d \le \mu(d) \le d^2+1$, that $\mu(d)$ is even for odd $d$, and that $\mu(2)=5$. This paper also conjectures that $\mu(d)= d^2+1$ for all $d \ge 1$.  Subsequently, {\bara} and {\mato}~\cite{BM06} verified the conjecture for $d=3$ and provided a lower bound of  $\mu(d) \ge \max(3d, \left\lceil \frac{d(d+1)}{5} \right\rceil)$ for $d \ge 3$, while Stephen and Thomas~\cite{ST06} independently proved that $\mu(d) \ge \left\lfloor \frac{(d+2)^2}{4}\right\rfloor$,  before Deza, Stephen, and Xie~\cite{DSX11}  showed that $\mu(d) \ge \left\lceil \frac{(d+1)^2}{2} \right\rceil$. The lower bound was slightly improved in dimension 4 to $\mu(4)\geq 14$ via  a computational approach presented in~\cite{DSX12}.

An octahedral system arising from a colourful point configuration $\S_1,\ldots,\S_{d+1}$, such that $\zero\in\bigcap_{i=1}^{d+1}\conv(\S_i)$ and $|\S_i|=d+1$ for all $i$, is without isolated vertex; that is, each vertex belongs to at least one edge. Indeed, according to a strengthening of the colourful Carath\'eodory theorem \cite{Bar82}, any point of such a colourful configuration is the vertex of at least one colourful simplex containing $\zero$. Theorem~\ref{main}, whose proof is given in  Section~\ref{proofs}, provides a lower bound for the number of edges of an octahedral system without isolated vertex.

\begin{theorem}\label{main}
An octahedral system without isolated vertex and with $|V_1|=|V_2|=\ldots=|V_n|=m$ has at least  $\frac{1}{2}m^2+\frac{5}{2}m-11$ edges for $4\leq m\leq n$ . 
\end{theorem}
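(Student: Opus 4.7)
The plan is to go beyond the Deza--Stephen--Xie lower bound $|E|\geq\lceil(m+1)^2/2\rceil$ by an additive $\Theta(m)$ term; the correction $\frac{5m}{2}-11$ in the statement reflects a refined use of the parity condition at vertices of small degree.

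\textbf{Setup and an easy case.} Let $\Omega=(V_1,\dots,V_n,E)$ be an octahedral system without isolated vertex with $|V_i|=m$ for all $i$, and for each colour class set $\delta_i=\min_{v\in V_i}\deg(v)$. Every edge contains exactly one vertex per colour class, hence $|E|=\sum_{v\in V_i}\deg(v)\geq m\delta_i$ for every $i$. Consequently, if some $\delta_i$ is at least $\frac{m}{2}+\frac{5}{2}-\frac{11}{m}$, the claimed bound is immediate. I therefore focus on the complementary regime, in which every colour class contains a vertex of small degree.

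\textbf{Parity propagation.} Fix a colour $i$ and a pair-configuration $W\subseteq\bigcup_{j\neq i}V_j$ with $|W\cap V_j|=2$ for $j\neq i$. Writing $N(v,W)$ for the number of edges of $\Omega$ contained in $\{v\}\cup W$, the parity condition applied to $X=\{v,v'\}\cup W$ gives $N(v,W)\equiv N(v',W)\pmod 2$ for every $v,v'\in V_i$, so a well-defined parity $P_i(W)\in\mathbb{F}_2$ depending only on $W$ emerges. When $v^*\in V_i$ attains $\delta_i$, one has $P_i(W)=0$ for all $W$ outside a small "link" of $v^*$, which forces edges through other vertices of $V_i$ to pair up over such configurations. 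Combined with the no-isolated-vertex hypothesis (each $v$ lies in some edge, hence has $N(v,W)\geq 1$ for some $W$), this restricts the feasible degree sequences well beyond the trivial $\deg(v)\geq 1$.

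\textbf{Refined counting.} My plan is to extend the double counting behind the DSX bound by weighting each pair-configuration $W$ by $P_i(W)$: an odd-parity $W$ must see an edge through every vertex of $V_i$, while an even-parity $W$ may see none. Bounding the number of odd-parity configurations when $\delta_i$ is small, summing the weighted inequalities across colour classes, and reconciling with the per-edge multiplicity should yield the linear improvement. An alternative route is induction on $m$ via removal of a minimum-degree vertex $v^*$ and its $\delta_1$ incident edges; the base case $m=4$ follows directly from the DSX bound (which gives $8\geq 7$), so only the inductive step requires new arguments.

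\textbf{Main obstacle.} The principal difficulty is coordinating the parity functions $P_i$ across different colour classes without over-counting edges, since each edge is seen from each of its $n$ vertices and contributes to the parity constraints of all of them. The $\mathbb{F}_2$-nature of the parity constraints against the integer-valued degree counts means that naive sums can be loose. Calibrating the slack precisely enough to obtain the constant $-11$ while retaining all the parity structure is the crux; this is also where the hypothesis $m\geq 4$ enters, both to anchor the induction and to ensure that enough "room" is available in each colour class for parity propagation to be effective.
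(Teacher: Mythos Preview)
Your proposal is a plan rather than a proof, and both routes you sketch have substantive gaps.

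For the inductive route, removing a minimum-degree vertex $v^*\in V_1$ runs into two problems. First, the resulting system has sizes $(m-1,m,\dots,m)$, which is not an instance of the statement you are trying to prove, so the induction hypothesis does not apply. Second, and more seriously, deleting $v^*$ together with its incident edges can create isolated vertices: if some $u\in V_j$ occurs only in edges that also contain $v^*$, then $u$ becomes isolated after deletion, and nothing about $v^*$ having minimum degree prevents this. The paper resolves both issues simultaneously. It introduces an auxiliary directed graph $D(\Omega)$ in which $(u,v)$ is an arc precisely when every edge through $v$ also passes through $u$; a vertex (or complete set) $X$ with no outneighbour in $D(\Omega)$ can then be deleted without creating isolated vertices (Lemma~\ref{induc}). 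To make the induction close, the paper strengthens the statement to one about $(\underbrace{k-1,\dots,k-1}_{z},\underbrace{k,\dots,k}_{k-z},m_{k+1},\dots,m_n)$-systems (Proposition~\ref{prop:kzn}), tracking the pair $(k,z)$ as vertices are peeled off one at a time. When no suitable singleton $X$ in the right range exists, separate technical lemmas (Lemmas~\ref{lem:1in1} and~\ref{lem:2in1}) supply the bound directly; these are where the parity argument is actually cashed out, and they are considerably more structured than the observation that $P_i(W)\in\mathbb{F}_2$ is well-defined.

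Your weighted-counting route is too vague to assess: you correctly identify the over-counting obstacle but offer no mechanism to overcome it, and the $\mathbb{F}_2$-versus-integer mismatch you flag is exactly why a global double count of this kind has not yielded the bound. The constant $-11$ is not slack left over from such a count; it emerges from the three-branch recursion in Proposition~\ref{prop:kzn} at the transitions $k\leq n-2$, $k=n-1$, $k=n$.
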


Setting $m=n=d+1$ in Theorem~\ref{main} yields a strengthening of the lower bound for $\mu(d)$ given in Corollary~\ref{coro1}.
\begin{corollary}\label{coro1}
$\mu(d)\geq \frac{1}{2}d^2+\frac{7}{2}d-8$ for $d\geq 4$. 
\end{corollary}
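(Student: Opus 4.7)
The plan is to show that Corollary~\ref{coro1} is essentially a direct substitution in Theorem~\ref{main}, so the entire burden lies in Theorem~\ref{main}; the deduction of the corollary is routine.

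First, consider a colourful point configuration $\S_1,\ldots,\S_{d+1}$ with $|\S_i|=d+1$ and $\zero\in\bigcap_{i=1}^{d+1}\conv(\S_i)$ that attains the minimum, so that the number of colourful simplices containing $\zero$ equals $\mu(d)$. By the Octahedron Lemma, the hypergraph $\Omega=(V_1,\ldots,V_{d+1},E)$ with $V_i=\S_i$, whose edges are exactly those colourful simplices containing $\zero$, is an octahedral system. Moreover, the strengthened colourful {\cara} theorem~\cite{Bar82} guarantees that each point of the configuration is a vertex of at least one such colourful simplex, so $\Omega$ is without isolated vertex. Thus $\Omega$ satisfies the hypotheses of Theorem~\ref{main} with $m=n=d+1$.

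Second, assuming $d\geq 4$ (so that $m=d+1\geq 5\geq 4$ and $m\leq n$ trivially), Theorem~\ref{main} applied to $\Omega$ yields $|E|\geq \frac{1}{2}(d+1)^2+\frac{5}{2}(d+1)-11$. Expanding gives
\[
\tfrac{1}{2}(d+1)^2+\tfrac{5}{2}(d+1)-11=\tfrac{1}{2}d^2+d+\tfrac{1}{2}+\tfrac{5}{2}d+\tfrac{5}{2}-11=\tfrac{1}{2}d^2+\tfrac{7}{2}d-8,
\]
and since $|E|=\mu(d)$ by construction, we obtain $\mu(d)\geq \tfrac{1}{2}d^2+\tfrac{7}{2}d-8$.

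I expect no genuine obstacle at the corollary level: the two conceptual inputs (the Octahedron Lemma giving the octahedral system structure, and the strengthened colourful {\cara} theorem ruling out isolated vertices) have both been recorded in the paragraph preceding Theorem~\ref{main}, and the arithmetic is immediate. The only subtlety worth flagging is the lower bound $d\geq 4$ in the corollary: the hypothesis of Theorem~\ref{main} actually allows $m=4$, i.e.\ $d=3$, but for $d=3$ the value $\frac{1}{2}d^2+\frac{7}{2}d-8=7$ is weaker than the known exact value $\mu(3)=10$ of {\bara}--{\mato}, so the restriction $d\geq 4$ is natural. All the work is deferred to Section~\ref{proofs}, where Theorem~\ref{main} must be established.
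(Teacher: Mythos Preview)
Your proof is correct and follows exactly the paper's approach: the paper derives Corollary~\ref{coro1} by the single sentence ``Setting $m=n=d+1$ in Theorem~\ref{main} yields a strengthening of the lower bound for $\mu(d)$,'' and you have simply unpacked this, making explicit the use of the Octahedron Lemma and the strengthened colourful {\cara} theorem already recorded in the introduction, together with the routine arithmetic.
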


Corollary~\ref{coro1} improves the known lower bounds for $\mu(d)$ for all $d \ge 5$. 
Refining the combinatorial approach for small instances in Section~\ref{mu4}, we show that $\mu(4)=17$, i.e. 
the conjectured equality $\mu(d)=d^2+1$ holds in dimension 4, see Proposition~\ref{prop:mu4}.
Properties of octahedral systems generalizing earlier results on colourful point configurations are presented in Section~\ref{OSprop}.
We answer open questions raised in~\cite{CDSX11} in Section~\ref{Q&A} by
determining in Theorem~\ref{thm:number} the number of distinct octahedral systems with given $|V_i|$'s, and by showing that the octahedral system 
given in Figure~\ref{fig:omega9} cannot arise from a colourful point configuration.

{\bara}'s sufficient condition for the existence of a colourful simplex containing $\zero$ has been recently generalized 
in~\cite{AB+09,HPT08,MD12}.
The related algorithmic question of finding a colourful simplex containing $\zero$ is presented and studied 
in~\cite{BO97,DHST08}.
We refer to ~\cite{Gro10,Kar12} for a recent breakthrough for a monocolour version.
\subsection{Definitions}
Let $E[X]$ denote the set of edges induced by a subset $X$ of the vertex set $\bigcup_{i=1}^nV_i$ of an octahedral system $\Omega=(V_1,\ldots,V_n,E)$. The degree of $X$, denoted by $\deg_{\:\Omega}(X)$, is the number of edges containing $X$. An octahedral system $\Omega=(V_1,\ldots,V_n,E)$ with $|V_i|=m_i$ for $i=1,\ldots,n$ is called a $(m_1,\ldots,m_n)$-octahedral system. Given an octahedral system $\Omega=(V_1,\ldots,V_n,E)$, a subset $T\subseteq\bigcup_{j=1}^nV_j$ is a {\em transversal} of $\Omega$ if $|T|=n-1$ and $|T\cap V_j|\leq 1$ for  $j=1,\ldots,n$. The set $T$ is called an {\em $i$-transversal} if $i$ is the unique index such that $|T\cap V_i|=0$. Let $\nu(m_1,\ldots,m_n)$ denote the minimum number of edges over all $(m_1,\ldots,m_n)$-octahedral systems without isolated vertex. The minimum number of edges over all $(d+1,\ldots,d+1)$-octahedral systems has been considered by Deza et al.~\cite{CDSX11} where this quantity is denoted by $\nu(d)$.
By a slight abuse of notation, we identify $\nu(d)$ with $\nu(\underbrace{d+1,\ldots ,d+1}_{d+1 \mbox{ \scriptsize times }})$. We have $\mu(d)\geq\nu(d)$, and the inequality is hypothesized to hold with equality. 

Throughout the paper, given an octahedral system $\Omega=(V_1,\ldots,V_n,E)$, the {\em parity property} refers to the evenness  of $|E[X]|$ if $|X\cap V_i|=2$ for $i=1,\ldots,n$. In a slightly weaker form, the parity property refers to the following observation: If $e$ is an edge, $T$ an $i$-transversal disjoint from $e$, and $x$ a vertex in $V_i \setminus e$, then there is an edge distinct from $e$ in $e\cup T \cup\{x\}$.

Let $D(\Omega)$ be the directed graph $(V,A)$  associated to $\Omega=(V_1,\ldots,V_n,E)$ with vertex set $V:=\bigcup_{i=1}^nV_i$ and where $(u,v)$ is an arc in $A$ if, whenever $v\in e\in E$, we have $u\in e$. In other words, $(u,v)$ is an arc of $D(\Omega)$ if any edge containing $v$ contains $u$ as well.

For an arc $(u,v)\in A$, $v$ is an {\em outneighbour} of $u$, and $u$ is an {\em inneighbour} of $v$. The set of all outneighbours of $u$ is denoted by $N^{+}_{D(\Omega)}(u)$.
Let $N_{D(\Omega)}^+(X)=\big(\bigcup_{u\in X}N^{+}_{D(\Omega)}(u)\big)\setminus X$; that is, the subset of vertices, not in $X$, being heads of arcs in $A$ having tail in $X$.
The {\em outneighbours} of a set $X$ are the elements of $N_{D(\Omega)}^+(X)$. Note that
 $D(\Omega)$ is a transitive directed graph: if $(u,v)$ and $(v,w)$ with $w\neq u$ are arcs of $D(\Omega)$, then $(u,w)$ is an arc of $D(\Omega)$. In particular, it implies that there is always a nonempty subset $X$ of vertices without outneighbour inducing a complete subgraph in $D(\Omega)$.
Moreover, a vertex of $D(\Omega)$ cannot have two distinct inneighbours in the same $V_i$.

\section{Combinatorial properties of octahedral systems}\label{OSprop}

This section presents properties of octahedral systems generalizing earlier results holding for $n=|V_1|=\ldots=|V_{n}|=d+1$. While Proposition~\ref{parity} and Proposition~\ref{isolated} deal with octahedral systems possibly with isolated vertices, Propositions~\ref{prop:nonisolated},~\ref{prop:upperbound_gen},~\ref{prop:222m}, and~\ref{prop:upperbound} deal with octahedral systems without isolated vertex.


\begin{proposition}\label{parity}
An octahedral system $\Omega=(V_1,\ldots,V_n,E)$ with even $|V_i|$ for $i=1,\dots, n$ has an even number of edges.
\end{proposition}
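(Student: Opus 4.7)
The plan is a straightforward double counting argument that exploits the parity condition directly. I will sum $|E[X]|$ over all subsets $X\subseteq\bigcup_{i=1}^n V_i$ satisfying $|X\cap V_i|=2$ for each $i$, and compute this sum modulo $2$ in two different ways.

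On the one hand, the parity condition says that each individual term $|E[X]|$ is even, so the sum is even. On the other hand, I would switch the order of summation and count, for each edge $e\in E$, the number of admissible $X$ that contain $e$. Since $e$ already supplies one vertex in each $V_i$, building such an $X$ amounts to choosing one additional vertex from each $V_i\setminus e$, which can be done in $\prod_{i=1}^{n}(|V_i|-1)$ ways. Hence
\[
\sum_{X}|E[X]|\;=\;|E|\cdot\prod_{i=1}^{n}(|V_i|-1).
\]

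Now I use the hypothesis: each $|V_i|$ is even, so each factor $|V_i|-1$ is odd, and the product is odd. Reducing mod $2$ gives $|E|\equiv\sum_X|E[X]|\equiv 0\pmod 2$, which is the conclusion.

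There is no real obstacle here; the only thing to be careful about is making sure the counting of pairs $(e,X)$ with $e\in E[X]$ is set up cleanly, and that one invokes the parity condition in exactly the form stated (on subsets meeting every $V_i$ in exactly two vertices). The argument does not require $\Omega$ to be without isolated vertices, which matches the statement of the proposition.
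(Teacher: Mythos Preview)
Your proposal is correct and is essentially the same double-counting argument the paper gives: sum $|E[X]|$ over all $X$ with $|X\cap V_i|=2$, note the sum is even by the parity condition, and observe each edge is counted $\prod_i(|V_i|-1)$ times, an odd number under the hypothesis. No meaningful difference in approach.
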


This proposition provides an alternate definition for octahedral systems where the condition ``$|X\cap V_i|=2$'' is replaced by ``$|X\cap V_i|$ is even'' for $i=1,\ldots,n$.

\begin{proof}
Let $\Xi$ be the set $\{X\subseteq\bigcup_{i=1}^nV_i:\,|X\cap V_i|=2\}$. Since $\Omega$ satisfies the parity property, $|E[X]|$ is even for any  $X\in\Xi$, and $\sum_{X\in\Xi}|E[X]|$ is even. Each edge of $\Omega$ being counted $(|V_1|-1)(|V_2|-1)\ldots(|V_n|-1)$ times in the sum, we have $\sum_{X\in\Xi}|E[X]|= (|V_1|-1)\ldots(|V_n|-1)|E|$. As $(|V_1|-1)\ldots(|V_n|-1)$ is odd,  the number $|E|$ of edges in $\Omega$ is even.
\qquad\end{proof} 

\begin{proposition}\label{isolated}
A non-trivial octahedral system has at least $\min_i |V_i|$ edges. 
\end{proposition}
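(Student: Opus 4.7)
The plan is to argue by contradiction. Suppose $|E|<m$, where $m:=\min_i |V_i|$; I will derive a violation of the parity property.

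First I would make a simple counting observation: since every edge of $\Omega$ contains exactly one vertex of each $V_j$, the set of $V_j$-coordinates appearing in some edge has cardinality at most $|E|$. The assumption $|E|<m\le |V_j|$ then forces each $V_j$ to contain at least one \emph{isolated} vertex, which I denote $u_j$.

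Next, using non-triviality, I would pick an arbitrary edge $e=\{v_1,\ldots,v_n\}$ with $v_j\in V_j$. Because $u_j$ is isolated while $v_j$ lies in the edge $e$, we have $u_j\ne v_j$ for every $j$, so the set $X:=\{v_1,u_1,\ldots,v_n,u_n\}$ satisfies $|X\cap V_j|=2$ for all $j$ and is therefore eligible for the parity property.

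The final step is to apply the parity property to $X$: the number $|E[X]|$ must be even. However, any edge $f\subseteq X$ has its $V_j$-coordinate in $\{v_j,u_j\}$, and since $u_j$ is isolated that coordinate is necessarily $v_j$; hence $f=e$, giving $|E[X]|=1$, which contradicts evenness. I do not anticipate any real obstacle here: the only substantive step is noticing that a small edge count forces an isolated vertex in every colour class, after which pairing $e$ with those isolated vertices immediately breaks parity.
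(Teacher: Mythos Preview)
Your argument is correct. The key insight — that fewer than $m$ edges forces an isolated vertex in \emph{every} colour class, after which a single application of the parity property on $X=e\cup\{u_1,\ldots,u_n\}$ yields $|E[X]|=1$ — is clean and complete.

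The paper's proof is organised differently. It works directly rather than by contradiction: assuming $|V_1|=\min_i|V_i|$, either every vertex of $V_1$ is covered (giving $|V_1|$ edges immediately), or some $x\in V_1$ is isolated, in which case one fixes an edge $e$, chooses $|V_1|-1$ pairwise disjoint $1$-transversals disjoint from $e$, and applies the weak parity property once per transversal to produce $|V_1|-1$ further edges, each forced to use a vertex of its own transversal and hence all distinct. Your route is shorter and needs only one parity application, because the contradiction hypothesis hands you an isolated vertex in every $V_j$ simultaneously; the paper's argument, by contrast, only assumes an isolated vertex in the single class $V_1$ and must therefore manufacture the remaining edges explicitly via multiple transversals. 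What the paper's version buys is a slightly more constructive feel (it locates the edges rather than deriving a parity violation), but for the bare lower bound your proof is the more economical one.
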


\begin{proof} Assume without loss of generality that $V_1$ has the smallest cardinality. If no vertex of $V_1$ is isolated, the octahedral system has at least $|V_1|$ edges.
Otherwise, at least one vertex $x$ of $V_1$ is isolated and the parity property applied to an edge, $(|V_1|-1)$  disjoint $1$-transversals, and $x$ gives at least $|V_1|$ edges.
The bound is tight as a $1$-transversal forming an edge with each vertex of $V_1$ is an octahedral system with $|V_1|$ edges.
\qquad\end{proof}

Setting $n=|V_1|=\ldots=|V_{n}|=d+1$ in Proposition~~\ref{parity} and Proposition~\ref{isolated} yields results given in~\cite{CDSX11}.

\begin{proposition}\label{prop:nonisolated}
An octahedral system without isolated vertex has at least $\max_{i\neq j}(|V_i|+|V_j|)-2$ edges.
\end{proposition}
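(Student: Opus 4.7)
My plan is to combine a greedy count with a parity-based supplement. By relabelling, assume $|V_1|+|V_2|$ realizes $\max_{i\neq j}(|V_i|+|V_j|)$, and write $m=|V_1|$, $\ell=|V_2|$; the target is $|E|\geq m+\ell-2$. For each $a\in V_1$ pick an edge $e^a\ni a$ (using that no vertex is isolated); these $m$ edges are pairwise distinct by their distinct $V_1$-vertices. Let $B=\{e^a\cap V_2:a\in V_1\}\subseteq V_2$, and for each $b\in V_2\setminus B$ pick an edge $e^b\ni b$; the $V_2$-vertex of $e^b$ lies outside $B$, so $e^b$ differs from every $e^a$. This yields $m+(\ell-|B|)$ distinct edges. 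Let $\tau$ be the minimum of $|B|$ over all admissible choices of the $e^a$'s, equivalently the minimum cardinality of a set $B'\subseteq V_2$ with $\bigcup_{b\in B'}A_b=V_1$, writing $A_b=\{a\in V_1:\exists e\ni a,b\}$. If $\tau\leq 2$ the greedy bound already gives $|E|\geq m+\ell-2$.

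Suppose now $\tau\geq 3$, so no pair $b_1,b_2\in V_2$ satisfies $A_{b_1}\cup A_{b_2}=V_1$. If some $a^*\in V_1$ has $\deg_\Omega(a^*)\geq\ell-1$, the double count $|E|=\sum_{a\in V_1}\deg_\Omega(a)\geq(\ell-1)+(m-1)=m+\ell-2$ suffices. Otherwise every $a\in V_1$ has $\deg_\Omega(a)\leq\ell-2$, and I plan to produce an additional edge $e_a'$ through each $a\in V_1$ using parity. Set $b_a^0=e^a\cap V_2$, pick $b_a^1\in V_2\setminus B_a$ (available since $|B_a|\leq\deg_\Omega(a)\leq\ell-2$), pick $a_a\in V_1\setminus(A_{b_a^0}\cup A_{b_a^1})$ (available since $\tau\geq 3$), and for each $j\geq 3$ pick $u_j\in V_j\setminus(e^a\cap V_j)$ (available since $|V_j|\geq 2$). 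Let $X_a=\{a,a_a,b_a^0,b_a^1\}\cup\bigcup_{j\geq 3}\{e^a\cap V_j,u_j\}$, so $|X_a\cap V_i|=2$ for each $i$ and the parity property gives $|E[X_a]|$ even. The choice of $a_a$ forbids any edge of $E[X_a]$ from having $V_1$-vertex $a_a$, and the choice of $b_a^1$ then forbids $V_2$-vertex $b_a^1$; every edge of $E[X_a]$ is therefore of the form $(a,b_a^0,w_3,\dots,w_n)$ with $w_j\in\{e^a\cap V_j,u_j\}$. Since $e^a$ is one such edge, parity forces a second edge $e_a'\neq e^a$ in $E[X_a]$.

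The edges $\{e_a':a\in V_1\}$ are pairwise distinct (different $V_1$-vertices), distinct from every $e^{a'}$ (for $a'=a$ by construction and for $a'\neq a$ by differing $V_1$-vertex), and distinct from every $e^b$ with $b\in V_2\setminus B$ (since $e_a'\cap V_2=b_a^0\in B$). Combining, $|E|\geq m+(\ell-|B|)+m\geq m+\ell$, which is more than enough. The main delicate point is the parity isolation step: one must verify that $X_a$ really forces $E[X_a]$ to see only edges through the projection $(a,b_a^0)$, and this is exactly what the choices $a_a\notin A_{b_a^0}\cup A_{b_a^1}$ and $b_a^1\notin B_a$ provide. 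Getting these exclusions simultaneously is the only spot where one genuinely uses both the case hypothesis $\tau\geq 3$ and the uniform degree bound $\deg_\Omega(a)\leq\ell-2$.
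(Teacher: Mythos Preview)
Your argument is correct. A small notational gap: you introduce $A_b$ but use $B_a$ without defining it; from context $B_a=\{b\in V_2:\exists\,e\ni a,b\}$, and indeed $|B_a|\le\deg_\Omega(a)$, so the choice of $b_a^1$ is available. With that clarified, every step checks out: the exclusion $a_a\notin A_{b_a^0}\cup A_{b_a^1}$ kills the $a_a$-branch of $E[X_a]$, the exclusion $b_a^1\notin B_a$ then kills the $b_a^1$-branch, and parity produces the extra edge $e_a'$ through $(a,b_a^0)$ as you claim.

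Your route, however, is substantially different from the paper's. The paper argues on the \emph{largest} class $V_n$ and branches on the minimum degree there: if every vertex of $V_n$ has degree at least $2$, the double count in $V_n$ alone already gives $2|V_n|\ge|V_n|+|V_{n-1}|-2$; otherwise some $v^*\in V_n$ has a unique edge $e(v^*)$, and a \emph{single} application of the parity property (to $e(v^*)$, one fixed $n$-transversal disjoint from it, and each $w\in V_n\setminus\{v^*\}$) yields $|V_n|$ edges all meeting $V_{n-1}$ in at most two vertices, after which $|V_{n-1}|-2$ further edges are forced by the non-isolation of the remaining $V_{n-1}$-vertices. This is shorter and uses only one parity step. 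Your proof instead works with the two largest classes simultaneously via a greedy cover and the parameter $\tau$, and in the hard case invokes parity once per vertex of $V_1$; it is more elaborate, though it does yield the stronger conclusion $|E|\ge m+\ell$ in that branch. Both arguments ultimately exploit the same ``one edge plus disjoint transversal plus free vertex'' mechanism, but the paper's single-vertex pivot is the more economical packaging.
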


The special case for octahedral systems arising from colourful point configurations, i.e. $\mu(d)\geq 2d$, has be proven in~\cite{DHST06}.

\begin{proof}
Assume without loss of generality that $2\leq|V_1|\leq\ldots\leq|V_{n-1}|\leq|V_n|$. Let $v^*$ be the vertex minimizing the degree in $\Omega$ over $V_n$. If $\deg(v^*)\geq 2$, then there are at least $2|V_n|\geq |V_n|+|V_{n-1}|-2$ edges. Otherwise,  $\deg(v^*)=1$ and we note $e(v^*)$ the unique edge containing $v^*$. Pick $w_i$ in $V_i\setminus e(v^*)$ for all $i<n$. Applying the octahedral property to the transversal $\{w_1,\ldots,w_{n-1}\}$, $e(v^*)$, and any $w\in V_n\setminus\{v^*\}$ yields at least $|V_n|$ edges not intersecting with $V_{n-1}\setminus \left(e(v^*)\cup\{w_{n-1}\}\right)$. Additional $|V_{n-1}|-2$ edges are needed to cover the vertices in $V_{n-1}\setminus \left(e(v^*)\cup\{w_{n-1}\}\right)$. In total we have at least $|V_n|+|V_{n-1}|-2$ edges.
\qquad\end{proof}

\begin{proposition}\label{prop:upperbound_gen}
$\nu(m_1,\ldots,m_n)\leq 2+\sum_{i=1}^n (m_i-2)$. 
\end{proposition}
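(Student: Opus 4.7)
The plan is to exhibit an octahedral system on classes of sizes $m_1,\ldots,m_n$ with no isolated vertex and exactly $2+\sum_{i=1}^n(m_i-2)$ edges. Writing $V_i=\{v_i^1,\ldots,v_i^{m_i}\}$, I would take two ``base'' edges $e^+=\{v_1^1,\ldots,v_n^1\}$ and $e^-=\{v_1^2,\ldots,v_n^2\}$, together with one extra edge for each vertex $v_i^k$ with $k\geq 3$, namely
\[e_{i,k}=\{v_1^2,\ldots,v_{i-1}^2,\,v_i^k,\,v_{i+1}^1,\ldots,v_n^1\};\]
that is, the edge using label $2$ in the first $i-1$ classes, label $k$ in class $i$, and label $1$ in the last $n-i$ classes. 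The edge count is then exactly $2+\sum_i(m_i-2)$, and the inclusions $v_i^1\in e^+$, $v_i^2\in e^-$, $v_i^k\in e_{i,k}$ for $k\geq3$ show that no vertex is isolated.

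The substantive task is checking the parity property. Given $X=X_1\cup\cdots\cup X_n$ with $|X_j|=2$, the plan is to introduce the two index sets $S=\{j:1\notin X_j\}$ and $T=\{j:2\notin X_j\}$. A direct inspection of the edges gives: $e^+\in E[X]$ iff $S=\emptyset$; $e^-\in E[X]$ iff $T=\emptyset$; and $e_{i,k}\in E[X]$ iff $\max S\leq i\leq\min T$ together with $k\in X_i\setminus\{1,2\}$ (using the conventions $\max\emptyset=-\infty$ and $\min\emptyset=+\infty$). Consequently only indices $i$ in the possibly empty interval $[\max S,\min T]$ contribute through some $e_{i,k}$, and the number of such edges at each such $i$ equals $|X_i\setminus\{1,2\}|\in\{0,1,2\}$.

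The main obstacle is a short case analysis on $(S,T)$, which I expect to resolve cleanly into the cases $S=T=\emptyset$, exactly one of $S,T$ empty, both non-empty with $S\cap T=\emptyset$, and $S\cap T\neq\emptyset$. In each case the contributions of the base edges and of the extreme valid indices $\max S$ and $\min T$ (respectively the unique ``type $D$'' position when $|S\cap T|=1$ and $\max S=\min T$) should combine to give $|E[X]|\in\{0,2\}$. Assuming the case analysis carries through, the construction is a valid octahedral system without isolated vertex, and the upper bound $\nu(m_1,\ldots,m_n)\leq 2+\sum_{i=1}^n(m_i-2)$ follows.
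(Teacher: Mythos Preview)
Your construction is in fact the same hypergraph the paper builds inductively: if at each step of the paper's recursion one always chooses $e_1$ to be the edge $\{v_1^2,\ldots,v_{j}^2\}$ and lets $v_{j+1}^1$ play the role of $u_{m_{j+1}}$, one obtains precisely your $e^+$, $e^-$, and $e_{i,k}$. The difference is in the verification of the parity condition: the paper checks it by induction on $n$ (four cases according to whether $u_{m_{n+1}}\in X$ and whether $e_1\subseteq X$), whereas you give a direct, global argument via the index sets $S$ and $T$. Your case analysis does carry through as stated: the key point you leave implicit is that any $i$ strictly between $\max S$ and $\min T$ lies in neither $S$ nor $T$, so $X_i=\{1,2\}$ and the contribution there is $0$; the remaining contributions at $\max S$, $\min T$ (or the single index $\max S=\min T\in S\cap T$), together with $e^\pm$, then sum to $0$ or $2$ in every case. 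The direct argument is a bit more explicit and avoids the induction, while the paper's version makes the recursive structure of the example clearer; both are equally short.
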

\begin{proof} For all $(m_1,\ldots,m_n)$, we construct an octahedral system $\Omega^{(m_1,\ldots,m_n)}=(V_1,\ldots,V_n,E^{(m_1,\ldots,m_n)})$ without isolated vertex and with $|V_i|=m_i$, such that $$|E^{(m_1,\ldots,m_n)}|=2+\sum_{i=1}^n (m_i-2).$$ Starting from $\Omega^{(m_1)}$, we inductively build $\Omega^{(m_1,\ldots,m_{n+1})}$ from  $\Omega^{(m_1,\ldots,m_n)}$.

The unique octahedral system without isolated vertex with $n=1$ and $|V_1|=m_1$ is $\Omega^{(m_1)}=(V_1,E^{(m_1)})$ where $E^{(m_1)}=\left\{\{v\}:\, v\in V_1\right\}$.
Assuming that $\Omega^{(m_1,\ldots,m_n)}=(V_1,\ldots,V_n,E^{(m_1,\ldots,m_n)})$ with $|E^{(m_1,\ldots,m_n)}|=2+\sum_{i=1}^n (m_i-2)$ has been built, we build the octahedral system
$\Omega^{(m_1,\ldots,m_{n+1})}=(V_1,\ldots,V_n,V_{n+1},E^{(m_1,\ldots,m_{n+1})})$ by picking an edge $e_1$ in $E^{(m_1,\ldots,m_n)}$ and setting
$$E^{(m_1,\ldots,m_{n+1})}=\{e_1\cup\{u_i\}:\,i=1,\ldots,m_{n+1}-1\}\cup\{e\cup\{u_{m_{n+1}}\}:\,e\in E^{(m_1,\ldots,m_n)}\setminus\{e_1\}\}$$
where $u_1,\ldots,u_{m_{n+1}}$ are the vertices of $V_{n+1}$. Clearly, $|E^{(m_1,\ldots,m_{n+1})}|=m_{n+1}-1+|E^{(m_1,\ldots,m_n)}|-1$; that is, 
$|E^{(m_1,\ldots,m_{n+1})}|=2+\sum_{i=1}^{n+1} (m_i-2)$.
Each vertex of $\Omega^{(m_1,\ldots,m_{n+1})}$ belongs to at least one edge by construction and we need to check the parity condition. 
Let $X\subseteq\bigcup_{i=1}^nV_i$ such that $|X\cap V_i|=2$ for $i=1,\ldots,n+1$ and consider the following four cases:\\

\noindent 
{\em Case $(a)$:} $X\cap V_{n+1}=\{u_j,u_k\}$ with $j\neq m_{n+1}$ and $k\neq m_{n+1}$, and $e_1\subseteq X$. Then, $e_1\cup\{u_j\}$ and $e_1\cup\{u_k\}$ are the only 2 edges induced by $X$ in $\Omega^{(m_1,\ldots,m_{n+1})}$.\\

\noindent 
{\em Case $(b)$:}
$X\cap V_{n+1}=\{u_j,u_k\}$ with $j\neq m_{n+1}$ and $k\neq m_{n+1}$, and $e_1\not\subseteq X$. Then, no edge are induced by $X$ in $\Omega^{(m_1,\ldots,m_{n+1})}$.\\

\noindent 
{\em Case $(c)$:}  $X\cap V_{n+1}=\{u_j,u_{m_{n+1}}\}$ and $e_1\subseteq X$. 
Then, the number of edges in $E^{(m_1,\ldots,m_n)}\setminus\{e_1\}$ induced by $X$ in $\Omega^{(m_1,\ldots,m_n)}$ is odd by the parity property. Hence, the number of edges in $\{e\cup\{u_{m_{n+1}}\}:\,e\in E^{(m_1,\ldots,m_n))}\setminus\{e_1\}\}$ induced by $X$ in $\Omega^{(m_1,\ldots,m_{n+1})}$ is odd as well. These edges, along with the edge $e_1\cup\{u_j\}$, are the only edges induced by $X$ in $\Omega^{(m_1,\ldots,m_{n+1})}$, i.e. the parity condition holds.\\

\noindent 
{\em Case $(d)$:}  $X\cap V_{n+1}=\{u_j,u_{m_{n+1}}\}$ and $e_1\not\subseteq X$.
Then, the number of edges in $E^{(m_1,\ldots,m_n)}\setminus\{e_1\}$ induced by $X$ in $\Omega^{(m_1,\ldots,m_n)}$ is even by the parity property. Hence, the number of edges in $\{e\cup\{u_{m_{n+1}}\}:\,e\in E^{(m_1,\ldots,m_n)}\setminus\{e_1\}\}$ induced by $X$ in $\Omega^{(m_1,\ldots,m_{n+1})}$ is even as well. These edges are the only edges induced by $X$ in $\Omega^{(m_1,\ldots,m_{n+1})}$, i.e. the parity condition holds.
\qquad\end{proof}

Figures~\ref{fig:omega33} and~\ref{fig:omega34} illustrate the construction in the proof of Proposition~\ref{prop:upperbound_gen} for $n=m_1=m_2=m_3=3$, and for $n-1=m_1=m_2=m_3=m_4=3$.

\begin{figure}
\begin{center}
\begin{tikzpicture}
    \node (v1) at (0,4) {};
    \node (v2) at (3,4) {};
    \node (v3) at (6,4) {};
    \node (v4) at (0,2) {};
    \node (v5) at (3,2) {};
    \node (v6) at (6,2) {};
    \node (v7) at (0,0) {};
    \node (v8) at (3,0) {};
    \node (v9) at (6,0) {};

    \begin{scope}[fill opacity=0.5]
    \filldraw[] ($(v1)+(-0.2,0)$) 
    	to[out=90,in=180] ($(v1) + (0,0.2)$) 
        to[out=0,in=180] ($(v2) + (0,0.2)$) 
        to[out=0,in=180] ($(v3) + (0,0.2)$)
        to[out=0,in=90] ($(v3) + (0.2,0)$)
        to[out=270,in=0] ($(v3) + (0,-0.2)$) 
        to[out=180,in=0] ($(v2) + (0,-0.2)$)
        to[out=180,in=0] ($(v1) + (0,-0.2)$) 
        to[out=180,in=270] ($(v1)+(-0.2,0)$);
    \filldraw[] ($(v1)+(-0.2,0)$) 
    	to[out=90,in=180] ($(v1) + (0,0.2)$) 
        to[out=0,in=180] ($(v2) + (0,0.2)$) 
        to[out=0,in=180] ($(v6) + (0,0.2)$)
        to[out=0,in=90] ($(v6) + (0.2,0)$)
        to[out=270,in=0] ($(v6) + (0,-0.2)$)
        to[out=180,in=0] ($(v2) + (0,-0.2)$)
        to[out=180,in=0] ($(v1) + (0,-0.2)$) 
        to[out=180,in=270] ($(v1)+(-0.2,0)$);
    \filldraw[] ($(v1)+(-0.2,0)$) 
    	to[out=90,in=180] ($(v1) + (0,0.2)$)
        to[out=0,in=135] ($(v5) + (0,0.5)$) 
        to[out=315,in=180] ($(v9) + (0,0.2)$)
        to[out=0,in=90] ($(v9) + (0.2,0)$)
        to[out=270,in=315] ($(v9) + (0,-0.2)$)
        to[out=180,in=315] ($(v5) + (0,-0.5)$)
        to[out=135,in=0] ($(v1) + (0,-0.2)$) 
        to[out=180,in=270] ($(v1)+(-0.2,0)$);
    \filldraw[] ($(v4)+(-0.2,0)$) 
        to[out=90,in=180] ($(v4) + (0,0.2)$)
        to[out=0,in=180] ($(v8) + (0,0.2)$)
        to[out=0,in=180] ($(v9) + (0,0.2)$)
        to[out=0,in=90] ($(v9) + (0.2,0)$)
        to[out=270,in=0] ($(v9) + (0,-0.2)$)
        to[out=180,in=0] ($(v8) + (0,-0.2)$)
        to[out=180,in=0] ($(v4) + (0,-0.2)$)
        to[out=180,in=270] ($(v4)+(-0.2,0)$);
    \filldraw[] ($(v7)+(-0.2,0)$) 
    	to[out=90,in=180] ($(v7) + (0,0.2)$) 
        to[out=0,in=180] ($(v8) + (0,0.2)$) 
        to[out=0,in=180] ($(v9) + (0,0.2)$)
        to[out=0,in=90] ($(v9) + (0.2,0)$)
        to[out=270,in=0] ($(v9) + (0,-0.2)$) 
        to[out=180,in=0] ($(v8) + (0,-0.2)$)
        to[out=180,in=0] ($(v7) + (0,-0.2)$) 
        to[out=180,in=270] ($(v7)+(-0.2,0)$);
    \end{scope}

    \foreach \v in {1,2,...,9} {
        \fill (v\v) circle (0.1);
        
    }
    
            \node at (5,4) {$e_1$};
\end{tikzpicture}
\caption{$\Omega^{(3,3)}$: a $(3,3,3)$-octahedral system matching the upper bound given in Proposition~\ref{prop:upperbound_gen}}
\label{fig:omega33}
\end{center}
\end{figure}

 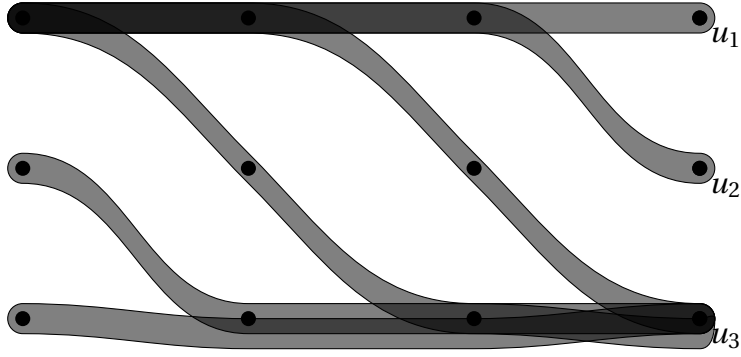
\begin{figure}
\begin{center}
\begin{tikzpicture}
    \node (v1) at (0,4) {};
    \node (v2) at (3,4) {};
    \node (v3) at (6,4) {};
    \node (v4) at (0,2) {};
    \node (v5) at (3,2) {};
    \node (v6) at (6,2) {};
    \node (v7) at (0,0) {};
    \node (v8) at (3,0) {};
    \node (v9) at (6,0) {};
        \node (v10) at (9,4) {};
        \node (v11) at (9,2) {};
        \node (v12) at (9,0) {};

    \begin{scope}[fill opacity=0.5]
    \filldraw[] ($(v1)+(-0.2,0)$) 
    	to[out=90,in=180] ($(v1) + (0,0.2)$) 
        to[out=0,in=180] ($(v2) + (0,0.2)$) 
        to[out=0,in=180] ($(v3) + (0,0.2)$)
        to[out=0,in=180] ($(v10) + (0,0.2)$) 
        to[out=0,in=90] ($(v10) + (0.2,0)$)
        to[out=270,in=0] ($(v10) + (0,-0.2)$) 
        to[out=180,in=0] ($(v3) + (0,-0.2)$)
        to[out=180,in=0] ($(v2) + (0,-0.2)$)
        to[out=180,in=0] ($(v1) + (0,-0.2)$) 
        to[out=180,in=270] ($(v1)+(-0.2,0)$);
    \filldraw[] ($(v1)+(-0.2,0)$) 
    	to[out=90,in=180] ($(v1) + (0,0.2)$) 
        to[out=0,in=180] ($(v2) + (0,0.2)$) 
        to[out=0,in=180] ($(v3) + (0,0.2)$)
        to[out=0,in=180] ($(v11) + (0,0.2)$) 
        to[out=0,in=90] ($(v11) + (0.2,0)$)
        to[out=270,in=0] ($(v11) + (0,-0.2)$) 
        to[out=180,in=0] ($(v3) + (0,-0.2)$)
        to[out=180,in=0] ($(v2) + (0,-0.2)$)
        to[out=180,in=0] ($(v1) + (0,-0.2)$) 
        to[out=180,in=270] ($(v1)+(-0.2,0)$);
    \filldraw[] ($(v1)+(-0.2,0)$) 
    	to[out=90,in=180] ($(v1) + (0,0.2)$) 
        to[out=0,in=180] ($(v2) + (0,0.2)$) 
        to[out=0,in=135] ($(v6) + (0,0.2)$)
		to[out=315,in=180] ($(v12) + (0,0.2)$) 
        to[out=0,in=90] ($(v12) + (0.2,0)$)
        to[out=270,in=0] ($(v12) + (0,-0.2)$) 
        to[out=180,in=315] ($(v6) + (0,-0.2)$)
        to[out=135,in=0] ($(v2) + (0,-0.2)$)
        to[out=180,in=0] ($(v1) + (0,-0.2)$) 
        to[out=180,in=270] ($(v1)+(-0.2,0)$);
    \filldraw[] ($(v1)+(-0.2,0)$) 
    	to[out=90,in=180] ($(v1) + (0,0.2)$)
        to[out=0,in=135] ($(v5) + (0,0.2)$) 
        to[out=315,in=180] ($(v9) + (0,0.2)$)
        to[out=0,in=180] ($(v12) + (0,0.2)$) 
        to[out=0,in=90] ($(v12) + (0.2,0)$)
        to[out=270,in=0] ($(v12) + (0,-0.2)$)
        to[out=180,in=0] ($(v9) + (0,-0.2)$)
        to[out=180,in=315] ($(v5) + (0,-0.2)$)
        to[out=135,in=0] ($(v1) + (0,-0.2)$) 
        to[out=180,in=270] ($(v1)+(-0.2,0)$);
    \filldraw[] ($(v4)+(-0.2,0)$) 
        to[out=90,in=180] ($(v4) + (0,0.2)$)
        to[out=0,in=180] ($(v8) + (0,0.2)$)
        to[out=0,in=180] ($(v9) + (0,0.2)$)
        to[out=0,in=180] ($(v12) + (0,0)$) 
        to[out=0,in=90] ($(v12) + (0.2,0)$)
        to[out=270,in=0] ($(v12) + (0,-0.4)$)
        to[out=180,in=0] ($(v9) + (0,-0.2)$)
        to[out=180,in=0] ($(v8) + (0,-0.2)$)
        to[out=180,in=0] ($(v4) + (0,-0.2)$)
        to[out=180,in=270] ($(v4)+(-0.2,0)$);
    \filldraw[] ($(v7)+(-0.2,0)$) 
    	to[out=90,in=180] ($(v7) + (0,0.2)$) 
        to[out=0,in=180] ($(v8) + (0,0)$) 
        to[out=0,in=180] ($(v9) + (0,0)$)
        to[out=0,in=180] ($(v12) + (0,0.2)$) 
        to[out=0,in=90] ($(v12) + (0.2,0)$)
        to[out=270,in=0] ($(v12) + (0,-0.2)$) 
        to[out=180,in=0] ($(v9) + (0,-0.4)$)
        to[out=180,in=0] ($(v8) + (0,-0.4)$)
        to[out=180,in=0] ($(v7) + (0,-0.2)$) 
        to[out=180,in=270] ($(v7)+(-0.2,0)$);
    \end{scope}

    \foreach \v in {1,2,...,12} {
        \fill (v\v) circle (0.1);
    }
    
    \fill (v10) circle (0.1) node [below right] {$u_1$};
    \fill (v11) circle (0.1) node [below right] {$u_2$};
    \fill (v12) circle (0.1) node [below right] {$u_3$};

\end{tikzpicture}
\caption{$\Omega^{(3,4)}$: a $(3,3,3,3)$-octahedral system matching the upper bound given in Proposition~\ref{prop:upperbound_gen}}
\label{fig:omega34}
\end{center}
\end{figure}

Proposition~\ref{prop:upperbound_gen} combined with Proposition~\ref{prop:nonisolated} directly implies Proposition~\ref{prop:222m} given without proof.
\begin{proposition}\label{prop:222m}
$\nu(2,\ldots,2,m_{n-1},m_n)=m_{n-1}+m_n-2$ for $m_{n-1},m_{n}\geq 2$.
\end{proposition}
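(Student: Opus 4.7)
The plan is to observe that Proposition~\ref{prop:nonisolated} and Proposition~\ref{prop:upperbound_gen} sandwich $\nu(2,\ldots,2,m_{n-1},m_n)$ between two quantities that happen to coincide in this specialization, so nothing beyond direct substitution is required.

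First I would invoke Proposition~\ref{prop:upperbound_gen} with $m_1=\cdots=m_{n-2}=2$ and the two free parameters $m_{n-1}, m_n$. The upper bound becomes
\[
\nu(2,\ldots,2,m_{n-1},m_n)\;\leq\;2+\sum_{i=1}^{n-2}(2-2)+(m_{n-1}-2)+(m_n-2)\;=\;m_{n-1}+m_n-2,
\]
with an explicit witnessing octahedral system given by the inductive construction in the proof of Proposition~\ref{prop:upperbound_gen}.

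Next I would apply Proposition~\ref{prop:nonisolated}, which guarantees that any octahedral system without isolated vertex has at least $\max_{i\neq j}(|V_i|+|V_j|)-2$ edges. Since the two largest sizes among $(2,\ldots,2,m_{n-1},m_n)$ are $m_{n-1}$ and $m_n$ (using $m_{n-1},m_n\geq 2$), this yields
\[
\nu(2,\ldots,2,m_{n-1},m_n)\;\geq\;m_{n-1}+m_n-2.
\]
Combining the two inequalities gives the stated equality.

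There is no real obstacle here: the only thing to check is that the specialization of the indices in Proposition~\ref{prop:nonisolated} genuinely picks out $m_{n-1}$ and $m_n$ as the two largest cardinalities, which is immediate from the hypothesis $m_{n-1},m_n\geq 2$. The argument is essentially a one-line verification that the general upper and lower bounds match on this family of parameters.
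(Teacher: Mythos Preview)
Your argument is correct and is exactly the approach the paper takes: it states that Proposition~\ref{prop:222m} follows directly from combining Proposition~\ref{prop:nonisolated} and Proposition~\ref{prop:upperbound_gen}, and gives no further proof.
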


When all $m_i$ are equal, the bound given in Proposition~\ref{prop:upperbound_gen} can be improved.
\begin{proposition}\label{prop:upperbound}
$\nu(\overbrace{m,\ldots,m}^{n\mbox{ \scriptsize{\rm{times}}}})\leq\min(m^2,n(m-2)+2)$ for all $m,n\geq 1$. 
\end{proposition}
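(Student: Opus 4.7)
The bound $n(m-2)+2$ is immediate: it is exactly the specialization of Proposition~\ref{prop:upperbound_gen} with $m_1=\cdots=m_n=m$. So the real content is the other bound, and the plan is to construct, for each $m$ and each $n\geq 2$, an explicit $(m,\ldots,m)$-octahedral system without isolated vertex whose number of edges is $m^2$. (For $n=1$ the singleton system $E=\{\{v^a_1\}:a\in[m]\}$ already has $m\leq m^2$ edges and satisfies the parity condition vacuously.)

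Write $V_i=\{v_i^1,\ldots,v_i^m\}$ and, for each pair $(a,b)\in[m]^2$, define
\[
 e_{a,b}=\{v_1^a\}\cup\{v_i^b : 2\leq i\leq n\},
\]
and set $E=\{e_{a,b}:(a,b)\in[m]^2\}$. By construction $\Omega=(V_1,\ldots,V_n,E)$ is an $n$-uniform $n$-partite hypergraph with exactly $m^2$ edges. Every $v_1^a$ lies in $e_{a,1}$, and for $i\geq 2$ every $v_i^b$ lies in $e_{1,b}$, so no vertex is isolated.

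The only thing to verify is the parity condition. Given $X\subseteq\bigcup_i V_i$ with $X\cap V_i=\{v_i^{p_i},v_i^{q_i}\}$ for all $i$, an edge $e_{a,b}$ is induced by $X$ precisely when $a\in\{p_1,q_1\}$ and $b\in\{p_i,q_i\}$ for every $i\geq 2$. The number of admissible values of $a$ is $2$, and the number of admissible values of $b$ is $\bigl|\bigcap_{i=2}^{n}\{p_i,q_i\}\bigr|\in\{0,1,2\}$. Hence
\[
 |E[X]|=2\cdot\Bigl|\bigcap_{i=2}^{n}\{p_i,q_i\}\Bigr|,
\]
which is even, so $\Omega$ is indeed an octahedral system.

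There is no real obstacle once the construction is spotted; the only mild subtlety is that the first part $V_1$ plays an asymmetric role, which is precisely what decouples the count of admissible $a$'s from the count of admissible $b$'s and makes the parity verification trivial.
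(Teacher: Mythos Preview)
Your proof is correct and follows essentially the same approach as the paper: the paper takes $m$ disjoint $n$-transversals and completes each with every vertex of $V_n$, whereas you take the $m$ diagonal $1$-transversals $\{v_i^b:2\le i\le n\}$ and complete each with every vertex of $V_1$, which is the same construction up to relabelling. Your parity verification is in fact more detailed than the paper's (which merely asserts the result); the only quibble is that for $n=1$ the parity condition is not vacuous but trivially satisfied since $|E[X]|=2$.
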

\begin{proof}
We construct an $(m,\dots,m)$-octahedral system without isolated vertex and with $m^2$ edges. Consider $m$ disjoint $n$-transversals, and form $m$ edges from each of these $n$-transversals by adding a distinct vertex of $V_n$. We obtain an octahedral system without isolated vertex with $m^2$ edges. The other inequality is a corollary of Proposition~\ref{prop:upperbound_gen}.
\qquad\end{proof}

Propositions~\ref{prop:upperbound_gen} and~\ref{prop:upperbound} can be seen as combinatorial counterparts and generalizations of $\mu(d)\leq d^2+1$ proven in~\cite{DHST06}.

An approach similar to the one developed in Section~\ref{mu4} shows that 
$$\nu(\overbrace{2,\ldots,2}^{z\mbox{ \scriptsize{times}}},\overbrace{3,\ldots,3}^{4-z\mbox{ \scriptsize{times}}},4)= 8-z\mbox{ and } 
\nu(\overbrace{3,\ldots,3}^{z\mbox{ \scriptsize{times}}},\overbrace{4,\ldots,4}^{5-z\mbox{ \scriptsize{times}}})= 12-z\mbox{ for }z=0,\ldots,4.$$
In other words,  the inequality given in Proposition~\ref{prop:upperbound_gen} holds with equality for small $m_i$'s and $n$ at most $5$. While this inequality also holds with equality for any $n$ when $m_1=\ldots=m_{n-2}=2$ by Proposition~\ref{prop:222m}, the inequality can be strict as, for example, $\nu(3,\ldots,3)<2+n$ for $n\geq 8$ by Proposition~\ref{prop:upperbound}.

\section{Additional results}\label{Q&A}
This section provides answers to open questions raised in~\cite{CDSX11} by
determining the number of distinct octahedral systems, and by showing that some octahedral systems 
cannot arise from a colourful point configuration.

Proposition~\ref{prop:diff} shows that the set of all octahedral systems defined on the same $V_i$'s
equipped with the symmetric difference as addition is a $\mathbb{F}_2$ vector space.

\begin{proposition}\label{prop:diff}
Let $\Omega_1=(V_1,\ldots,V_n,E_1)$ and $\Omega_2=(V_1,\ldots,V_n,E_2)$ be two octahedral systems on the same sets of vertices, the symmetric difference $\Omega_1\triangle\Omega_2=(V_1,\ldots,V_n,E_1\triangle E_2)$ is an octahedral system.
\end{proposition}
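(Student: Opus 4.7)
The plan is to verify directly that the hypergraph $\Omega_1 \triangle \Omega_2 = (V_1,\ldots,V_n, E_1 \triangle E_2)$ satisfies the three defining conditions of an octahedral system: it is $n$-uniform and $n$-partite on the $V_i$'s with $|V_i| \geq 2$, and the parity condition holds. The first two conditions are immediate since every edge of $E_1 \triangle E_2$ is an edge of $E_1$ or $E_2$, hence an $n$-set with exactly one vertex in each $V_i$, and the $V_i$'s are unchanged. So the entire content of the proposition is the parity condition.

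The key observation is that whether an edge $e$ is ``induced by $X$'' depends only on the relation $e \subseteq X$ between $e$ and $X$, not on which hypergraph $e$ belongs to. Consequently, for any $X \subseteq \bigcup_i V_i$, one has
\[
(E_1 \triangle E_2)[X] \;=\; E_1[X] \,\triangle\, E_2[X].
\]
Now fix $X$ with $|X \cap V_i| = 2$ for $i=1,\ldots,n$. By the parity condition applied to $\Omega_1$ and $\Omega_2$ separately, both $|E_1[X]|$ and $|E_2[X]|$ are even. Since $|A \triangle B| = |A| + |B| - 2|A \cap B|$, it follows that $|(E_1 \triangle E_2)[X]|$ is even. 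Thus $\Omega_1 \triangle \Omega_2$ satisfies the parity condition, completing the proof.

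There is no real obstacle here; the proposition is essentially a restatement of the fact that the parity condition is a system of $\mathbb{F}_2$-linear constraints on the indicator vector of $E$ indexed by edges (one constraint per set $X$ with $|X\cap V_i|=2$), so the set of solutions is closed under symmetric difference. The only care needed is to note the distributivity of ``induced subhypergraph'' over symmetric difference, which I have isolated above.
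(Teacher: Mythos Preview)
Your proof is correct and follows essentially the same route as the paper: both verify the parity condition via the identity $|(E_1\triangle E_2)[X]|=|E_1[X]|+|E_2[X]|-2|(E_1\cap E_2)[X]|$, with your version making the intermediate step $(E_1\triangle E_2)[X]=E_1[X]\triangle E_2[X]$ explicit. The added remarks about the $n$-uniform $n$-partite structure and the $\mathbb{F}_2$-linear interpretation are accurate and harmless elaborations.
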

\begin{proof}
Consider $X\subseteq\bigcup_{i=1}^nV_i$ such that $|X\cap V_i|=2$ for $i=1,\ldots,n$. We have
$|(E_1\triangle E_2)[X]|=|E_1[X]|+|E_2[X]|-2|(E_1\cap E_2)[X]|$, and therefore the parity condition holds for $\Omega_1\triangle\Omega_2$.
\qquad\end{proof}

Proposition~\ref{prop:diff} can be used to build octahedral systems or to prove the non-existence of others. For instance, Proposition~\ref{prop:diff} implies that there is a $(3,3,3)$-octahedral system without isolated vertex with exactly $22$ edges by setting $\Omega_1$ to be the complete $(3,3,3)$-octahedral system with $27$ edges, and $\Omega_2$ to be the $(3,3,3)$-octahedral system with exactly $5$ edges given in Figure~\ref{fig:omega9}. The octahedral system $\Omega_1\triangle\Omega_2$ is without isolated vertex since each vertex in $\Omega_1$ is of degree $9$. Similarly, Proposition~\ref{prop:diff} shows that no $(3,3,3)$-octahedral system with exactly $25$ or $26$ edges exists. Otherwise a $(3,3,3)$-octahedral system with exactly $1$ or $2$ edges would exist, contradicting Proposition~\ref{isolated}.

\begin{theorem}\label{thm:number}
Given $n$ disjoint finite vertex sets $V_1,\ldots,V_n$, the number of octahedral systems on $V_1,\ldots,V_n$ is $2^{\Pi_{i=1}^n|V_i|-\Pi_{i=1}^n(|V_i|-1)}$.
\end{theorem}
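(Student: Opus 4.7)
My plan is to leverage the $\mathbb{F}_2$-vector space structure on the collection of all octahedral systems with prescribed vertex sets $V_1,\ldots,V_n$ under symmetric difference, which is guaranteed by Proposition~\ref{prop:diff}. Let $\mathcal{H}$ denote the $\mathbb{F}_2$-vector space of all $n$-uniform $n$-partite hypergraphs on $V_1,\ldots,V_n$; identifying an edge set with its indicator vector gives $\dim_{\mathbb{F}_2} \mathcal{H} = \prod_i |V_i|$. The set of octahedral systems is the subspace $\mathcal{O}\subseteq\mathcal{H}$ cut out by the linear equations $|E[X]|\equiv 0\pmod{2}$ for each $X$ with $|X\cap V_i|=2$, so it suffices to determine the codimension of $\mathcal{O}$.

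I would then identify $\mathcal{H}$ with the tensor product $W_1\otimes\cdots\otimes W_n$, where $W_i=\mathbb{F}_2^{V_i}$, sending each edge $\{a_1,\ldots,a_n\}$ to the pure tensor $a_1\otimes\cdots\otimes a_n$. Under the standard bilinear form on $W_1\otimes\cdots\otimes W_n$ coming from the natural basis, the parity condition indexed by $X$ with $X\cap V_i=\{a_i,b_i\}$ is precisely orthogonality to the ``pair tensor'' $(a_1+b_1)\otimes\cdots\otimes(a_n+b_n)$, because this tensor expands into the sum of indicators of the $2^n$ potential edges contained in $X$. Hence $\mathcal{O}$ is the annihilator of the subspace $\mathcal{P}\subseteq\mathcal{H}$ spanned by all such pair tensors.

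The heart of the argument is the identification $\mathcal{P}=U_1\otimes\cdots\otimes U_n$, where $U_i\subseteq W_i$ is the hyperplane of vectors whose coordinates sum to zero. The inclusion $\mathcal{P}\subseteq U_1\otimes\cdots\otimes U_n$ is immediate since $a_i+b_i\in U_i$. For the reverse inclusion, fix any $v_i^*\in V_i$; then $\{v+v_i^* : v\in V_i\setminus\{v_i^*\}\}$ is a basis of $U_i$ consisting of pair vectors, so by multilinearity every pure tensor $u_1\otimes\cdots\otimes u_n$ with $u_i\in U_i$ expands as a sum of pair tensors, and such pure tensors span $U_1\otimes\cdots\otimes U_n$. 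Since $\dim U_i=|V_i|-1$, we obtain $\dim\mathcal{P}=\prod_i(|V_i|-1)$, so $\dim\mathcal{O}=\prod_i|V_i|-\prod_i(|V_i|-1)$, and the theorem follows.

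The only step demanding care is the equality $\mathcal{P}=U_1\otimes\cdots\otimes U_n$, specifically the verification that the pair tensors actually exhaust the entire tensor product rather than merely spanning a proper subspace. This is a purely multilinear statement and it is the unique place where the combinatorial setup drops out; everything else reduces to the elementary dimension count $\dim(U_1\otimes\cdots\otimes U_n)=\prod_i(|V_i|-1)$ and the fact that over $\mathbb{F}_2$, the number of vectors in a subspace of dimension $k$ is $2^k$.
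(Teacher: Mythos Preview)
Your proposal is correct and follows essentially the same route as the paper: both identify the hypergraphs with $F_1\otimes\cdots\otimes F_n$ over $\mathbb{F}_2$, recognize the octahedral systems as the annihilator of the subspace $G_1\otimes\cdots\otimes G_n$ (your $U_1\otimes\cdots\otimes U_n$), and conclude via the dimension count $\prod_i|V_i|-\prod_i(|V_i|-1)$. The only cosmetic difference is that where the paper invokes Proposition~\ref{parity} to justify that orthogonality to all of $G_1\otimes\cdots\otimes G_n$ coincides with the size-two parity conditions, you instead argue directly that the pair vectors $\{v+v_i^*:v\neq v_i^*\}$ form a basis of $U_i$, so the pair tensors already span the full tensor product; this is the same fact proved from the linear-algebra side rather than the combinatorial side.
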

\begin{proof} 
We denote by $F_i$ the binary vector space $\mathbb{F}_2^{V_i}$ and by $G_i$ its subspace whose vectors have an even number of $1$. Let $\mathcal{H}$ be the tensor product $F_1\otimes\ldots\otimes F_n$ and $\mathcal{X}$ its subspace $G_1\otimes\ldots\otimes G_n$. Note that there is a bijection between the elements of $\mathcal{H}$ and the $n$-partite hypergraphs on vertex sets $V_1,\ldots,V_n$: each edge $\{v_1,\ldots,v_n\}$ of such a hypergraph $H$, with $v_i\in V_i$ for all $i$, is identified with the vector $x_1\otimes\ldots\otimes x_n$, where $x_i$ is the unit vector of $F_i$ having a $1$ uniquely at position $v_i$. 
Define now $\psi$ as follows.
$$\begin{array}{cccc}\psi: & \mathcal{H} & \rightarrow & \mathcal{X}^* \\
& H & \mapsto & \langle H,\cdot\rangle\end{array}$$ 
By the above identification and according to the alternate definition of an octahedral system given by Proposition~\ref{parity},
the subspace $\ker\psi$ of $\mathcal{H}$ is  the set of all octahedral systems on vertex sets $V_1,\ldots,V_n$. Note that by definition $\psi$ is surjective. Therefore, we have $\dim\ker\psi+\dim\mathcal{X}^*=\dim\mathcal{H}$ which implies  $\dim\ker\psi=\dim\mathcal{H}-\dim\mathcal{X}$ using the isomorphism between a vector space and its dual. The dimension of $\mathcal{H}$ is $\Pi_{i=1}^n|V_i|$ and the dimension of $\mathcal{X}$ is $\Pi_{i=1}^n(|V_i|-1)$. This leads to the desired conclusion.
 \qquad\end{proof}

The vector space structure of the kernel of $\psi$ gives another proof of Proposition~\ref{prop:diff}. 
Karasev~\cite{Kar12b} noted that the set of all colourful simplices in a colourful point configuration forms a $d$-dimensional coboundary of the join $\mathbf{S}_1*\ldots*\mathbf{S}_{d+1}$ with mod $2$ coefficients. We further note that the octahedral systems form precisely the $(n-1)$-coboundaries of the join $V_1*\ldots*V_n$ with mod $2$ coefficients. Indeed, with mod $2$ coefficients and with the notations of the proof of Theorem~\ref{thm:number}, the coboundaries of $V_1*\ldots*V_n$ are generated by the vectors of the form $x_1\otimes\ldots\otimes x_{j-1}\otimes e\otimes x_{j+1}\otimes\ldots\otimes x_n$, with $j\in\{1,\ldots,n\}$ and $e=(1,\ldots,1)\in\mathbb{F}_2^{V_j}$. With the identification given in Proposition~\ref{prop:diff}, each of these vectors forms an octahedral system. Moreover, they generate the set of all octahedral systems seen as a vector space: 
given an octahedral system and one of its vertex $v$ of nonzero degree, we can add vectors of the above form in order to make $v$ isolated; repeating this argument for each $V_i$, we get an octahedral system with an isolated vertex in each $V_i$; such an octahedral system is empty, i.e. the zero vector of the space of octahedral systems.

A natural question is whether there is a {\em non-labelled} version of Theorem~\ref{thm:number}, that is whether it is possible to compute, or to bound, the number of non-isomorphic octahedral systems. Two isomorphic octahedral systems; that is, identical up to a permutation of the $V_i$'s, or of the vertices in one of the $V_i$'s, are considered distinct in Theorem~\ref{thm:number}. Answering this question would fully answer Question 7 of~\cite{CDSX11}. While Polya's theory might be helpful, we were not able to address the question. 

Finally, Question 6 of~\cite{CDSX11} asks whether any octahedral system $\Omega=(V_1,\ldots,V_n,E)$ with $n=|V_1|=\ldots=|V_n|=d+1$ can arise from a colourful point configuration $\mathbf{S}_1,\ldots,\mathbf{S}_{d+1}$ in $\mathbb{R}^d$? That is, are all octahedral systems {\em realisable}? We answer negatively this question in Proposition~\ref{prop:nonreal}.

\begin{proposition}\label{prop:nonreal}
Not all octahedral systems are realisable.
\end{proposition}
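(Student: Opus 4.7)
The plan is to exhibit a concrete octahedral system and prove that no colourful point configuration can produce it, by reducing to a finite combinatorial case analysis on the circle. The natural candidate is the $(3,3,3)$-octahedral system $\Omega$ with exactly $5$ edges depicted in Figure~\ref{fig:omega9}, which has already played a role in this section. Since $\mu(2)=5$, the edge count alone does not preclude realization, so any obstruction must be structural.

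Suppose for contradiction that $\Omega$ is realised by a configuration $\mathbf{S}_1,\mathbf{S}_2,\mathbf{S}_3\subset\mathbb{R}^2$ in general position, with $|\mathbf{S}_i|=3$ and $\zero\in\bigcap_{i=1}^3\conv(\mathbf{S}_i)$. Replacing each point $p$ by $p/\|p\|$, one obtains a configuration on the unit circle that induces the same set of colourful triangles containing $\zero$: a triangle with vertices on the circle contains $\zero$ precisely when each of the three arcs it cuts out is shorter than a semicircle, and this condition is preserved under positive rescaling of any single vertex. A putative realization of $\Omega$ is therefore encoded by a cyclic word of length $9$ on three colours with each colour appearing three times.

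The remainder is a finite case analysis. I would enumerate all such cyclic words up to rotation, reflection, and relabeling of the three colour classes; for each, compute the hypergraph of colourful triples whose three arcs are all strictly less than $\pi$; and compare it to $\Omega$ up to permutation within each colour class. To control the size of the enumeration I would first discard any cyclic pattern whose induced hypergraph fails to match readily computed invariants of $\Omega$ (its degree sequence, the multiset of vertex pairs lying in a common edge, and the arcs of the directed graph $D(\Omega)$ defined in Section~\ref{OSprop}), and then inspect the handful of remaining candidates by hand.

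The main obstacle is book-keeping rather than conceptual depth: the geometric reduction to circular arrangements is routine, but verifying exhaustively that no cyclic pattern yields a hypergraph isomorphic to $\Omega$ — across all permutations of the three colour classes and of the three vertices within each class — requires careful organization. Once that check is complete, the contradiction establishes that $\Omega$ is not realisable, proving Proposition~\ref{prop:nonreal}.
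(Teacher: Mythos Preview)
Your proposal has a real gap. The cyclic order of the nine points on the circle does \emph{not} determine which colourful triangles contain $\zero$: a triangle on the unit circle contains the centre iff its three vertices do not lie in a common open semicircle, and that depends on arc lengths, not merely on the relative order of all nine points. Concretely, with cyclic order $a_1b_1c_1a_2b_2c_2a_3b_3c_3$, placing the points at angles $0,10,20,120,130,140,240,250,260$ versus $0,40,80,120,160,200,240,280,320$ (degrees) gives the same cyclic word, and $\zero\in\conv(\mathbf S_i)$ for each colour in both cases, yet the triple $\{a_1,b_1,c_2\}$ contains $\zero$ in the second placement and not in the first. So the step ``enumerate cyclic words of length $9$ and compute the hypergraph'' is not well-defined as stated. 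To salvage it you would have to record, for each point, where its antipode sits among the other eight --- i.e.\ enumerate interlacing cyclic arrangements of eighteen marked points --- and you have not carried out that (much larger) search. A smaller slip: the system in Figure~\ref{fig:omega9} has $9$ edges, not $5$; the earlier ``$5$ edges'' in this section is a misprint pointing to Figure~\ref{fig:omega33}.

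For contrast, the paper avoids any enumeration by exploiting the specific structure of the $9$-edge system: each $a_i\in\mathbf S_1$ is paired with a unique $b_i\in\mathbf S_2$, and $\{a_i,b_i,c\}$ is an edge for \emph{every} $c\in\mathbf S_3$. Fix $x_3\in\mathbf S_3$ and the line $\ell$ through $x_3$ and $\zero$. Two of the $a_i$'s lie on one side of $\ell$; since $\{a_i,b_i,x_3\}$ and $\{a_{i'},b_{i'},x_3\}$ are edges, their partners $b_i,b_{i'}$ lie on the other side. Whichever partner is farther from $x_3$ along that arc then also forms, together with the \emph{other} $a$-point and $x_3$, a triangle containing $\zero$ --- but that triple is not an edge of $\Omega$. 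This yields the contradiction in a few lines, with no case analysis.
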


This statement is also true for octahedral systems without isolated vertex.

\begin{proof}
We provide an example of a non-realisable octahedral system without isolated vertex in Figure~\ref{fig:omega9}. Indeed, suppose by contradiction that this octahedral system can be realized as a colourful point configuration $\mathbf{S}_1,\mathbf{S}_2,\mathbf{S}_3$. Without loss of generality, we can assume that all the points lie on a circle centred at $\zero$. Take $x_3\in\mathbf{S}_3$, and consider the line $\ell$ going through $x_3$ and $\zero$. There are at least two points $x_1$ and $x'_1$ of $\mathbf{S}_1$ on the same side of $\ell$. There is a point $x_2\in\mathbf{S}_2$, respectively $x'_2\in\mathbf{S}_2$, on the other side of the line $\ell$ such that 
 $\zero\in\conv(x_1,x_2,x_3)$, respectively $\zero\in\conv(x'_1,x'_2,x_3)$. Assume without loss of generality that $x'_2$ is further away from $x_3$ than $x_2$. Then, $\conv(x_1,x'_2,x_3)$ contains $\zero$ as well, contradicting the definition of the octahedral system given in Figure~\ref{fig:omega9}.
 \qquad\end{proof}

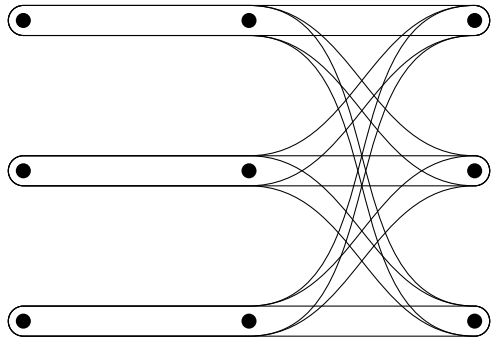
\begin{figure}[ht]
\begin{center}
\begin{tikzpicture}
    \node (v1) at (0,4) {};
    \node (v2) at (3,4) {};
    \node (v3) at (6,4) {};
    \node (v4) at (0,2) {};
    \node (v5) at (3,2) {};
    \node (v6) at (6,2) {};
    \node (v7) at (0,0) {};
    \node (v8) at (3,0) {};
    \node (v9) at (6,0) {};

    \begin{scope}[fill opacity=0.5]
    \draw ($(v1)+(-0.2,0)$) 
    	to[out=90,in=180] ($(v1) + (0,0.2)$) 
        to[out=0,in=180] ($(v2) + (0,0.2)$) 
        to[out=0,in=180] ($(v3) + (0,0.2)$)
        to[out=0,in=90] ($(v3) + (0.2,0)$)
        to[out=270,in=0] ($(v3) + (0,-0.2)$) 
        to[out=180,in=0] ($(v2) + (0,-0.2)$)
        to[out=180,in=0] ($(v1) + (0,-0.2)$) 
        to[out=180,in=270] ($(v1)+(-0.2,0)$);
    \draw ($(v1)+(-0.2,0)$) 
    	to[out=90,in=180] ($(v1) + (0,0.2)$) 
        to[out=0,in=180] ($(v2) + (0,0.2)$) 
        to[out=0,in=180] ($(v6) + (0,0.2)$)
        to[out=0,in=90] ($(v6) + (0.2,0)$)
        to[out=270,in=0] ($(v6) + (0,-0.2)$)
        to[out=180,in=0] ($(v2) + (0,-0.2)$)
        to[out=180,in=0] ($(v1) + (0,-0.2)$) 
        to[out=180,in=270] ($(v1)+(-0.2,0)$);
    \draw ($(v1)+(-0.2,0)$) 
    	to[out=90,in=180] ($(v1) + (0,0.2)$)
        to[out=0,in=180] ($(v2) + (0,0.2)$) 
        to[out=0,in=180] ($(v9) + (0,0.2)$)
        to[out=0,in=90] ($(v9) + (0.2,0)$)
        to[out=270,in=0] ($(v9) + (0,-0.2)$)
        to[out=180,in=0] ($(v2) + (0,-0.2)$)
        to[out=180,in=0] ($(v1) + (0,-0.2)$) 
        to[out=180,in=270] ($(v1)+(-0.2,0)$);
    \draw ($(v4)+(-0.2,0)$) 
    	to[out=90,in=180] ($(v4) + (0,0.2)$) 
        to[out=0,in=180] ($(v5) + (0,0.2)$) 
        to[out=0,in=180] ($(v3) + (0,0.2)$)
        to[out=0,in=90] ($(v3) + (0.2,0)$)
        to[out=270,in=0] ($(v3) + (0,-0.2)$) 
        to[out=180,in=0] ($(v5) + (0,-0.2)$)
        to[out=180,in=0] ($(v4) + (0,-0.2)$) 
        to[out=180,in=270] ($(v4)+(-0.2,0)$);
    \draw($(v4)+(-0.2,0)$) 
    	to[out=90,in=180] ($(v4) + (0,0.2)$) 
        to[out=0,in=180] ($(v5) + (0,0.2)$) 
        to[out=0,in=180] ($(v6) + (0,0.2)$)
        to[out=0,in=90] ($(v6) + (0.2,0)$)
        to[out=270,in=0] ($(v6) + (0,-0.2)$)
        to[out=180,in=0] ($(v5) + (0,-0.2)$)
        to[out=180,in=0] ($(v4) + (0,-0.2)$) 
        to[out=180,in=270] ($(v4)+(-0.2,0)$);
    \draw ($(v4)+(-0.2,0)$) 
    	to[out=90,in=180] ($(v4) + (0,0.2)$)
        to[out=0,in=180] ($(v5) + (0,0.2)$) 
        to[out=0,in=180] ($(v9) + (0,0.2)$)
        to[out=0,in=90] ($(v9) + (0.2,0)$)
        to[out=270,in=0] ($(v9) + (0,-0.2)$)
        to[out=180,in=0] ($(v5) + (0,-0.2)$)
        to[out=180,in=0] ($(v4) + (0,-0.2)$) 
        to[out=180,in=270] ($(v4)+(-0.2,0)$);
    \draw ($(v7)+(-0.2,0)$) 
    	to[out=90,in=180] ($(v7) + (0,0.2)$) 
        to[out=0,in=180] ($(v8) + (0,0.2)$) 
        to[out=0,in=180] ($(v3) + (0,0.2)$)
        to[out=0,in=90] ($(v3) + (0.2,0)$)
        to[out=270,in=0] ($(v3) + (0,-0.2)$) 
        to[out=180,in=0] ($(v8) + (0,-0.2)$)
        to[out=180,in=0] ($(v7) + (0,-0.2)$) 
        to[out=180,in=270] ($(v7)+(-0.2,0)$);
    \draw ($(v7)+(-0.2,0)$) 
    	to[out=90,in=180] ($(v7) + (0,0.2)$) 
        to[out=0,in=180] ($(v8) + (0,0.2)$) 
        to[out=0,in=180] ($(v6) + (0,0.2)$)
        to[out=0,in=90] ($(v6) + (0.2,0)$)
        to[out=270,in=0] ($(v6) + (0,-0.2)$)
        to[out=180,in=0] ($(v8) + (0,-0.2)$)
        to[out=180,in=0] ($(v7) + (0,-0.2)$) 
        to[out=180,in=270] ($(v7)+(-0.2,0)$);
    \draw ($(v7)+(-0.2,0)$) 
    	to[out=90,in=180] ($(v7) + (0,0.2)$)
        to[out=0,in=180] ($(v8) + (0,0.2)$) 
        to[out=0,in=180] ($(v9) + (0,0.2)$)
        to[out=0,in=90] ($(v9) + (0.2,0)$)
        to[out=270,in=0] ($(v9) + (0,-0.2)$)
        to[out=180,in=0] ($(v8) + (0,-0.2)$)
        to[out=180,in=0] ($(v7) + (0,-0.2)$) 
        to[out=180,in=270] ($(v7)+(-0.2,0)$);
    \end{scope}

    \foreach \v in {1,2,...,9} {
        \fill (v\v) circle (0.1);
        
    }

\end{tikzpicture}
\caption{A non realisable $(3,3,3)$-octahedral system with $9$ edges}
\label{fig:omega9}
\end{center}
\end{figure}
We conclude the section with a question to which the intuitive answer is yes but we are unable to settle.

\begin{question}
Is $\nu(m_1,\ldots,m_n)$ increasing with each of the $m_i$?
\end{question}

\section{Proof of the main result}\label{proofs}
\subsection{Technical lemmas}
While Lemma~\ref{induc} allows induction within octahedral systems, Lemmas~\ref{tech},~\ref{lem:1in1}, and~\ref{lem:2in1} are used in the subsequent sections to bound the number of edges of an octahedral system without isolated vertex. 
\begin{lemma}\label{induc}
Consider an octahedral system $\Omega$ without isolated vertex.
If $X$ induces a complete subgraph in $D(\Omega)$ and $N_{D(\Omega)}^+(X)=\emptyset$, then $\Omega\setminus X$ is an octahedral system without isolated vertex. 
\end{lemma}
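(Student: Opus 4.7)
The plan is to extract two structural consequences from the hypotheses on $X$, and then check the two defining properties of an octahedral system without isolated vertex for $\Omega\setminus X$.

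First I would observe that $X$ is a partial transversal, that is, $|X\cap V_i|\leq 1$ for every $i$. Indeed, if two distinct vertices $u,u'\in X$ lay in a common $V_j$, then the complete-subgraph hypothesis would give the arc $(u,u')$ in $D(\Omega)$, meaning every edge containing $u'$ would also contain $u$; but an edge is a transversal with one vertex in $V_j$, so $u'$ would have to be isolated, contradicting the hypothesis on $\Omega$. Using the complete-subgraph hypothesis a second time yields the crucial dichotomy: \emph{every edge of $\Omega$ either contains all of $X$ or is disjoint from $X$.} Indeed, if an edge $e$ contains some $u\in X$, then for each other $u'\in X$ the arc $(u',u)\in D(\Omega)$ forces $u'\in e$.

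With these two observations, the verification is essentially forced. For the no-isolated-vertex condition, fix $v\notin X$ and pick any $u\in X$. Since $v\notin N_{D(\Omega)}^{+}(X)$, the pair $(u,v)$ is not an arc, so some edge $e$ contains $v$ but not $u$; by the dichotomy, $e$ must avoid $X$ entirely, hence $e$ is an edge of $\Omega\setminus X$ incident to $v$. For the parity property, take any $Y\subseteq \bigcup_i(V_i\setminus X)$ with $|Y\cap V_i|=2$ for every $i$. Because $Y$ is disjoint from $X$, the dichotomy again implies that the edges of $\Omega$ induced by $Y$ are exactly the edges of $\Omega\setminus X$ induced by $Y$; the parity property of $\Omega$ applied to $Y$ then transfers verbatim to $\Omega\setminus X$.

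The proof is really carried by the first paragraph: once one sees that $X$ behaves like a transversal and that edges of $\Omega$ split cleanly into $X$-containing and $X$-avoiding families, everything else is bookkeeping with the definitions of $D(\Omega)$ and of the parity property. The one mildly delicate point, and the only place where a pitfall could hide, is to make sure that $N^{+}(X)=\emptyset$ is used correctly to produce an edge avoiding \emph{all} of $X$ rather than merely one element of $X$ -- this is where the dichotomy is indispensable.
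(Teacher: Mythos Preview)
Your proof is correct and follows the same line as the paper's, which simply asserts in one sentence that the parity condition is clearly satisfied and that each vertex of $\Omega\setminus X$ lies in some edge. You have carefully unpacked what the paper leaves implicit---the partial-transversal structure of $X$, the resulting dichotomy on edges, and the precise way $N_{D(\Omega)}^+(X)=\emptyset$ yields an $X$-avoiding edge through each remaining vertex---so your argument is a fully detailed version of the paper's terse one.
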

\begin{proof}
The parity condition is clearly satisfied for $\Omega\setminus X$, and each vertex of $\Omega\setminus X$ is contained in at least one edge.
\qquad\end{proof}

\begin{lemma}\label{tech}
For $n\geq 4$, consider a $(\overbrace{k-1,\ldots,k-1}^{z\mbox{ \scriptsize{\rm{times}}}},\overbrace{k,\ldots,k}^{k-z\mbox{ \scriptsize{\rm{times}}}},m_{k+1},\ldots,m_n)$-octahedral system $\Omega=(V_1,\ldots,V_n,E)$ without isolated vertex, with $3\leq k\leq m_{k+1}\leq\ldots\leq m_n$ and $0\leq z<k\leq n$. If there is a subset $X\subseteq\bigcup^n_{i=z+1} V_i$ of cardinality at least 2 inducing in $D(\Omega)$ a complete subgraph without outneighbour, then $\Omega$ has at least $(k-1)^2+2$ edges, unless $\Omega$ is a $(2,2,3,3)$-octahedral system. Under the same condition on $X$, a $(2,2,3,3)$-octahedral system has at least $5$ edges.

\end{lemma}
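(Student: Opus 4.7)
\medskip
\noindent\textbf{Proof proposal.} The plan is to peel off $X$ via Lemma~\ref{induc}, control the reduced system $\Omega':=\Omega\setminus X$ by induction, and use the parity property to close the hardest subcase.

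First, the hypothesis on $X$ forces a clean decomposition of the edges of $\Omega$. Because $X$ induces a complete subgraph of $D(\Omega)$, no two of its vertices lie in the same $V_i$ (otherwise one of them would be forced to be isolated), and every edge of $\Omega$ that meets $X$ must contain all of $X$. Setting $I:=\{i:V_i\cap X\neq\emptyset\}$ and $E_X:=\{e\in E:X\subseteq e\}$, the hypothesis $N_{D(\Omega)}^+(X)=\emptyset$ combined with the no-isolated-vertex assumption guarantees that every vertex $w\notin X$ lies in at least one edge of $\Omega$ disjoint from $X$. Hence $\Omega'$ is an octahedral system without isolated vertex, with each part of size at least $k-1\geq 2$, Lemma~\ref{induc} applies, and we obtain the decomposition $|E(\Omega)|=|E_X|+|E(\Omega')|$ with $|E_X|\geq 1$. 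I would then proceed by induction (say on $k$, or jointly on $(k,z)$): the inductive hypothesis, or Proposition~\ref{prop:nonisolated} applied to $\Omega'$, yields a lower bound on $|E(\Omega')|$, and as long as $|E_X|$ is not minimal, combining this bound with $|E_X|\geq 2$ is enough to reach $(k-1)^2+2$.

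The critical case is $|E_X|=1$: every vertex of $X$ has degree exactly $1$ via the unique edge $e_0\in E_X$, and the burden shifts entirely onto $|E(\Omega')|$. Here I would invoke the weak form of the parity property. For each $i\in I$, each $x\in V_i\setminus e_0$, and each $i$-transversal $T$ of $\Omega$ disjoint from $e_0$, parity applied to $e_0\cup T\cup\{x\}$ produces an edge different from $e_0$; this edge cannot contain $v:=V_i\cap X$ (since $\deg(v)=1$), so it contains $x$, and by completeness of $X$ it is disjoint from $X$, hence an edge of $\Omega'$. Varying $x$ over $V_i\setminus e_0$ and $T$ over a suitably chosen family of pairwise-disjoint transversals produces enough new edges of $\Omega'$ to push $|E(\Omega')|$ up to $(k-1)^2+1$, establishing the claim. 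The construction of this family requires the parts $V_1,\ldots,V_z,V_{z+1},\ldots,V_n$ to have enough vertices outside $e_0$ to host disjoint transversals; this degrees-of-freedom budget fails precisely in the $(2,2,3,3)$-configuration, where one handles the residual count by a direct parity-driven enumeration of the few possibilities, obtaining only $|E(\Omega)|\geq 5$.

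I expect the main obstacle to lie in the $|E_X|=1$ regime: showing that the edges manufactured by the iterated parity argument, as $(i,x,T)$ range over their admissible choices, are pairwise distinct and not already counted in the inductive lower bound on $|E(\Omega')|$, so that their accumulation genuinely reaches $(k-1)^2+1$. A secondary difficulty is organising the induction cleanly and isolating $(2,2,3,3)$ as the unique boundary instance where the generic parity construction degenerates.
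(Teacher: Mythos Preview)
Your proposal has a genuine gap in the case $|E_X|\geq 2$. You claim that ``the inductive hypothesis, or Proposition~\ref{prop:nonisolated} applied to $\Omega'$'' yields a lower bound on $|E(\Omega')|$ which, together with $|E_X|\geq 2$, reaches $(k-1)^2+2$. But neither tool gives anything close to $(k-1)^2$. Proposition~\ref{prop:nonisolated} is only linear: for instance, with $k=5$, $z=0$, $n=5$ and $|X|=2$ hitting $V_4,V_5$, the reduced system $\Omega'$ is a $(5,5,5,4,4)$-system and Proposition~\ref{prop:nonisolated} gives merely $|E(\Omega')|\geq 8$, so your count is $10$ against the required $18$. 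Induction on $k$ fares no better: applying the lemma to $\Omega'$ requires the existence of a new complete set $X'$ without outneighbours in $D(\Omega')$, which you have no control over, and even granting it, the bound with $k'=k-1$ is $(k-2)^2+2$, still short by $2k-5$. The paper does \emph{not} argue inductively here at all.

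What the paper does instead in this case is a direct parity argument, of exactly the same flavour as the one you reserved for $|E_X|=1$. Fix $i^*$ with $X\cap V_{i^*}\neq\emptyset$; for any $w\in V_{i^*}\setminus X$, any edge $e\ni w$ is disjoint from $X$ (since an edge meeting $X$ contains all of $X$, hence $X\cap V_{i^*}\neq w$), and applying the weak parity property with $e$, the vertex $X\cap V_{i^*}$, and $k-2$ pairwise disjoint $i^*$-transversals avoiding $e\cup X$ forces $\deg_\Omega(w)\geq k-1$. Since $i^*>z$ gives $|V_{i^*}\setminus X|\geq k-1$, this yields $(k-1)^2$ edges missing $X$, plus the $\geq 2$ edges through $X$. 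So the parity mechanism is needed in \emph{both} degree regimes, not just when $|E_X|=1$.

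Your treatment of the case $|E_X|=1$ is closer to the paper's, though the paper organises it more cleanly: it fixes one $w_j\in V_j\setminus e(X)$ for every $j\notin I$ and then varies the tuple $(u_i)_{i\in I}$ over $\prod_{i\in I}(V_i\setminus X)$; each choice forces a new edge containing all the $u_i$'s (it cannot touch $X$ since $\deg_\Omega(X)=1$), and distinct tuples give distinct edges automatically. This gives $(k-1)^{|X|}+1$ edges outright, after which the residual vertex-covering argument isolates $(2,2,3,3)$. Your version, varying $i\in I$ and transversals $T$, produces edges whose distinctness is genuinely hard to certify --- precisely the obstacle you flagged --- and the paper's parametrisation sidesteps it entirely.
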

\begin{proof} 
Any edge intersecting $X$ contains $X$ since $X$ induces a complete subgraph in $D(\Omega)$, implying $\deg_{\Omega}(X)\geq 1$. Moreover, we have $|X\cap V_i|\leq 1$ for $i=1,\ldots,n$.\\

\noindent 
{\em Case $(a)$:} $\deg_{\Omega}(X)\geq 2$. Choose $i^*$ such that $|X\cap V_{i^*}|\neq 0$. We first note that the degree of each $w$ in $V_{i^*}\setminus X$ is at least $k-1$.

Indeed, take an edge $e$ containing $w$ and a $i^*$-transversal $T$ disjoint from $e$ and $X$. Note that $e$ does not contain any vertex of $X$ as underlined in the first sentence of the proof. Apply the weak form of the parity property to $e$, $T$, and the unique vertex $x$ in $X\cap V_{i^*}$. There is an edge distinct from $e$ in $e\cup T\cup\{x\}$. Note that this edge contains $w$, otherwise it would contain $x$ and any other vertex in $X$. It also contains at least one vertex in $T$. For a fixed $e$, we can actually choose $k-2$ disjoint $i^*$-transversals $T$ of that kind and apply the weak form of the parity property to each of them. Thus, there are $k-2$ distinct edges containing $w$ in addition to $e$. 

Therefore, we have in total at least $(k-1)^2$ edges, in addition to $\deg_{\Omega}(X)\geq 2$ edges.\\

\noindent 
{\em Case $(b)$:} $\deg_{\Omega}(X)=1$. Let $e(X)$ denote the unique edge containing $X$. For each $i$ such that $|X\cap V_i|=0$, pick a vertex $w_i$ in $V_i\setminus e(X)$. Applying the parity property to $e(X)$, the $w_i$'s, and any colourful selection of $u_i\in V_i\setminus X$ such that $|X\cap V_i|\neq 0$ shows that there is at least one additional edge containing all $u_i$'s. We can actually choose $(k-1)^{|X|}$ distinct colourful selections of $u_i$'s. With $e(X)$, there are in total $(k-1)^{|X|}+1$ edges.

If $|X|\geq 3$, then $(k-1)^{|X|}+1\geq(k-1)^2+2$. If $|X|=2$, there exists $j\geq n-2$ such that $|X\cap V_j|=0$. If $|V_j|\geq 3$, then at least $|V_j|-2\geq 1$ edges are needed to cover the vertices of $V_j$ not belonging to these $(k-1)^{|X|}+1$ edges. Otherwise, $|V_j|=2$ and we necessarily have $j\leq z$ and $k=3$. In this case, we have thus $k-1\geq z\geq n-2$, i.e. $n=4$ and $z=2$. $\Omega$ is then a $(2,2,3,3)$-octahedral system and $(k-1)^{|X|}+1=5$.\qquad\end{proof}

\noindent While Lemma~\ref{lem:1in1} is similar to Lemma~\ref{tech}, we were not able to find a common generalization.

\begin{lemma}\label{lem:1in1}
Consider a $(\overbrace{k-1,\ldots,k-1}^{z\mbox{ \scriptsize{\rm{times}}}},\overbrace{k,\ldots,k}^{k-z\mbox{ \scriptsize{\rm{times}}}},m_{k+1},\ldots,m_n)$-octahedral system $\Omega=(V_1,\ldots,V_n,E)$ without isolated vertex, with $3\leq k\leq m_{k+1}\leq\ldots\leq m_n$ and $0\leq z<k\leq n$. If there is a subset $X\subseteq\bigcup^n_{i=z+1} V_i$ of cardinality at least 2 inducing in $D(\Omega)$ a complete subgraph without outneighbour, then $\Omega$ has at least $(k-1)^2+|V_{n-1}|+|V_n|-2k+1$ edges.
\end{lemma}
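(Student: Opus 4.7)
The plan is to follow the case split of the proof of Lemma~\ref{tech}, separating Case~(a) $\deg_\Omega(X)\ge 2$ from Case~(b) $\deg_\Omega(X)=1$, while sharpening the edge count to capture contributions from the two largest parts $V_{n-1}$ and $V_n$. Within each case I further distinguish sub-cases by which of $V_{n-1}$ and $V_n$ meet $X$. The workhorse carried over from Lemma~\ref{tech} is that for every $i^*$ with $|X\cap V_{i^*}|=1$, each $w\in V_{i^*}\setminus X$ satisfies $\deg_\Omega(w)\ge k-1$; this is how the outneighbour-free hypothesis on $X$ is turned into a local degree bound.

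In Case~(a), I partition $E$ by the $V_{n-1}$- and $V_n$-coordinates of each edge relative to $X$ and combine the corresponding degree inequalities. When $X$ meets both $V_n$ and $V_{n-1}$, summing the $V_n$- and $V_{n-1}$-degree inequalities and using $|E_{--}|\le|E|$, where $E_{--}$ is the class of edges avoiding both $X$-vertices, yields $|E|\ge\tfrac{1}{2}\bigl[(k-1)(|V_n|+|V_{n-1}|-2)+2\deg_\Omega(X)\bigr]$; a short calculation shows its gap with the target equals $\tfrac{1}{2}(k-3)(|V_n|+|V_{n-1}|-2k)\ge 0$ for $k\ge 3$. When $X$ meets only $V_n$, the bound $|E|\ge 2+(k-1)(|V_n|-1)$ alone already exceeds the target because $|V_n|\ge|V_{n-1}|\ge k$. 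When $X$ meets only $V_{n-1}$, or avoids both $V_{n-1}$ and $V_n$, the appropriate $V_{i^*}$-degree inequality is supplemented by the outneighbour-free observation that every $v\in V_{n-1}\cup V_n$ not in $X$ lies in some edge disjoint from $X$, producing additional edges outside the class of edges meeting $X$.

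In Case~(b), let $e(X)$ be the unique edge containing $X$, pick $w_j\in V_j\setminus e(X)$ for each $j$ with $X\cap V_j=\emptyset$, and for each colourful selection $(u_i)$ with $u_i\in V_i\setminus X$ over the $V_i$'s meeting $X$, apply the parity property to $e(X)\cup\{w_j\}\cup\{u_i\}$. A double-counting argument (identical to the one used in Lemma~\ref{tech}) produces at least $(k-1)^{|X|}\ge(k-1)^2$ extra edges, each of whose $V_j$-vertex lies in the two-element set $\{v_j,w_j\}$ for $j$ outside the support of $X$. Consequently, each vertex of $V_j\setminus\{v_j,w_j\}$ for $j\in\{n-1,n\}$ with $X\cap V_j=\emptyset$ must belong to a further edge, and careful bookkeeping of how these further edges can simultaneously cover vertices of $V_{n-1}$ and of $V_n$ supplies the remaining $|V_{n-1}|+|V_n|-2k$ edges.

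The principal obstacle I foresee is the sub-case of Case~(a) in which $X\cap V_n=\emptyset$ while $|V_n|$ is much larger than $|V_{n-1}|$: the standard degree argument then applies only to $V_{n-1}$ and is by itself insufficient. Here the outneighbour-free property is essential: it guarantees, for every $y\in V_n$, an edge $g_y$ containing $y$ and disjoint from $X$, producing $|V_n|$ distinct edges outside the class of edges meeting $X$. Combining these $g_y$'s with the $V_{n-1}$-degree bound via a careful disjointness analysis between the two edge classes is the step I expect to be most delicate, and it is the key to matching the $|V_n|+|V_{n-1}|$ term in the target.
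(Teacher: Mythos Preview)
Your proposal contains a genuine gap in exactly the sub-case you flag as ``most delicate'': when $X\cap V_n=\emptyset$. The difficulty is not merely delicate but fatal for the method as stated. The $|V_n|$ edges $g_y$ (one for each $y\in V_n$, containing $y$ and disjoint from $X$) all lie in $E_{\bar X}$, and so does every edge through any $w\in V_{n-1}\setminus X$; hence the ``$V_{n-1}$-degree bound'' $|E_{\bar X}|\ge (k-1)(|V_{n-1}|-1)$ and the bound $|E_{\bar X}|\ge |V_n|$ are two lower bounds on the \emph{same} quantity. There is no disjointness of edge classes to exploit: the $g_y$'s are already absorbed in the $V_{n-1}$-degree sum, because each $g_y$ meets $V_{n-1}$ at some $w\neq x_{n-1}$ and contributes $1$ to $\deg_\Omega(w)$. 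Concretely, take $|V_{n-1}|=k$ and $|V_n|$ large; your combined bound gives at best $|E|\ge 2+\max\{(k-1)^2,|V_n|\}$, while the target is $(k-1)^2+|V_n|-k+1$, which is strictly larger once $|V_n|\ge k+2$. The same issue bites in your Case~(b) when $X$ avoids $V_n$ and $|V_{n-1}|>2k-2$: the extra edges covering $V_n\setminus\{v_n,w_n\}$ and those covering $V_{n-1}\setminus\{v_{n-1},w_{n-1}\}$ can coincide, so you only secure $\max(|V_{n-1}|-2,|V_n|-2)$ additional edges, not $|V_{n-1}|+|V_n|-2k$.

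The paper's proof uses a different case split that sidesteps this entirely. It fixes $i^*$ with $X\cap V_{i^*}\neq\emptyset$, selects $W_{i^*}\subseteq V_{i^*}\setminus X$ of size $k-1$ and one edge $e(w)$ per $w\in W_{i^*}$, arranging that some $e(w^*)$ contains a vertex $v^*$ of \emph{minimum degree in $V_n\setminus X$}, and then splits on whether $\deg_\Omega(v^*)\le k-2$ or $\ge k-1$. If $\deg_\Omega(v^*)\ge k-1$, then every vertex of $V_n\setminus X$ has degree at least $k-1$, giving $|E|\ge (k-1)(|V_n|-1)+1$ directly. If $\deg_\Omega(v^*)\le k-2$, one first obtains $(k-1)^2$ edges inside a window $W=\bigcup W_i$ with $|W_i|=k-1$, and then, because $v^*$ has few neighbours, one can choose an $n$-transversal $T\subseteq W$ avoiding the entire neighbourhood of $v^*$; applying the parity property to $e(w^*)$, $T$, and each $y\in V_n\setminus W_n$ forces a new edge through $y$ (it cannot pass through $v^*$ since $T$ misses its neighbourhood), yielding $|V_n|-k+1$ further edges all lying in $W_{n-1}$ on the $(n-1)$-coordinate, whence $|V_{n-1}|-k+1$ more edges are needed to cover $V_{n-1}\setminus W_{n-1}$. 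The low-degree pivot $v^*$ is the missing idea: it manufactures genuinely new edges indexed by $V_n\setminus W_n$ that are provably disjoint from the $(k-1)^2$ already found, which is precisely what your ``disjointness analysis'' cannot achieve without it.
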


\begin{proof}
Choose $i^*$ such that $X\cap V_{i^*}\neq\emptyset$. Choose $W_{i^*}\subseteq V_{i^*}\setminus X$ of cardinality $k-1$. For each vertex $w\in W_{i^*}$, choose an edge $e(w)$ containing $w$. We can assume that there is a vertex $w^*\in W_{i^*}$ such that $e(w^*)$ contains a vertex $v^*$ minimizing the degree in $\Omega$ over $V_n\setminus X$. Choose $W_i\subseteq V_i$ for $i\neq i^*$ such that $|W_i|=k-1$ and 
$$\bigcup_{w\in W_{i^*}} e(w) \subseteq W=\bigcup_{i=1}^{n} W_i.$$\\

\noindent 
{\em Case $(a)$:} the degree of $v^*$ in $\Omega$ is at most $k-2$. For all $w\in W_{i^*}$, applying the parity property to $e(w)$, the unique vertex of $X\cap V_{i^*}$, and $k-2$ disjoint $i^*$-transversals in $W$ yields $(k-1)^2$ distinct edges, in a similar way as in Case $(a)$ of the proof of Lemma~\ref{tech}. Applying the parity property to $e(w^*)$, any $n$-transversal in $W$ not intersecting the neighbourhood of $v^*$ in $\Omega$, and each vertex in $V_n\setminus W_n$ gives $|V_n|-k+1$ additional edges not intersecting $V_{n-1}\setminus W_{n-1}$. Additional $|V_{n-1}|-k+1$  edges are needed to cover the vertices of  $V_{n-1}\setminus W_{n-1}$. In total we have at least $(k-1)^2+|V_{n}|+|V_{n-1}|-2(k-1)$ edges.\\

\noindent 
{\em Case $(b)$:} the degree of $v^*$ in $\Omega$ is at least $k-1$. We have then at least $(k-1)(|V_{n}|-1)+1= (k-1)^2+(k-1)(|V_{n}|-k)+1\geq (k-1)^2+|V_{n-1}|+|V_{n}|-2k+1$ edges.
\qquad\end{proof}

\begin{lemma}\label{lem:2in1}
Consider a $(\overbrace{k-1,\ldots,k-1}^{z\mbox{ \scriptsize{\rm{times}}}},\overbrace{k,\ldots,k}^{k-z\mbox{ \scriptsize{\rm{times}}}},m_{k+1},\ldots,m_n)$-octahedral system $\Omega=(V_1,\ldots,V_n,E)$ without isolated vertex, with $3\leq k\leq m_{k+1}\leq\ldots\leq m_n$ and $0\leq z<k\leq n$. If there are at least two vertices $v$ and $v'$ of $V_n$ having outneighbours in $D(\Omega)$ in the same $V_{i^*}$ with $i^*<k$, then the octahedral system has at least $|V_{i^*}|(k-1)+|V_{n-1}|+|V_{n}|-2k$ edges.
\end{lemma}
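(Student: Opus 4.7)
\noindent\textbf{Setup and degree bound in $V_{i^*}$.}
Since a vertex of $D(\Omega)$ cannot have two distinct inneighbours in the same $V_i$, we have $w\neq w'$; by hypothesis $v\neq v'$. Every edge through $w$ contains $v$, every edge through $w'$ contains $v'$, and these two families are disjoint because each edge has a unique vertex in $V_{i^*}$. I would first show $\deg_\Omega(u)\geq k-1$ for every $u\in V_{i^*}$. Pick $e(u)\ni u$; since $|e(u)\cap V_n|=1$, at most one of $v,v'$ lies in $e(u)$, so one can choose a pair $(\bar w,\bar v)\in\{(w,v),(w',w')\}$ with $\bar v\notin e(u)$ and $\bar w\neq u$ (for $u=w$ take $(w',v')$, using $v'\notin e(w)$; for $u=w'$ take $(w,v)$; for $u\notin\{w,w'\}$ either pair with $\bar v\notin e(u)$ works). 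Select $k-2$ pairwise disjoint $i^*$-transversals $T_1,\ldots,T_{k-2}$, each disjoint from $e(u)\cup\{\bar v\}$; this is feasible since $|V_i|\geq k-1$ for $i\neq n$ and $|V_n|\geq k$. The weak form of the parity property applied to $(e(u),T_j,\bar w)$ produces an edge $e'_j\neq e(u)$ in $e(u)\cup T_j\cup\{\bar w\}$ whose $V_{i^*}$-vertex is $u$ or $\bar w$; if it were $\bar w$, then $\bar w$-forcing of $\bar v$ would require $\bar v\in e'_j\subseteq e(u)\cup T_j\cup\{\bar w\}$, a contradiction. Thus $e'_j$ is a new edge through $u$, giving $\deg_\Omega(u)\geq k-1$ and hence $|E|\geq|V_{i^*}|(k-1)$.

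\noindent\textbf{Case split on $V_n$.}
Let $v^*\in V_n$ minimize $\deg_\Omega$. If $\deg_\Omega(v^*)\geq k-1$, then every vertex of $V_n$ has degree at least $k-1$, so $|E|\geq|V_n|(k-1)$. A routine arithmetic check using $|V_n|\geq|V_{n-1}|\geq k-1$ and $|V_{i^*}|\leq k$ shows $|V_n|(k-1)\geq |V_{i^*}|(k-1)+|V_{n-1}|+|V_n|-2k$, so this case is done.

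\noindent\textbf{Main case: $\deg_\Omega(v^*)\leq k-2$.}
Then $v^*\notin\{v,v'\}$. Mimicking Case~$(a)$ of Lemma~\ref{lem:1in1}, I pick edges $e(u)\ni u$ for each $u\in V_{i^*}$ with a distinguished $u^*\in V_{i^*}$ satisfying $e(u^*)\ni v^*$, and choose $W_i\subseteq V_i$ of size $\min(k,|V_i|)$ containing $\bigcup_u e(u)\cap V_i$, with $v^*\in W_n$ and $W_{i^*}=V_{i^*}$; set $W=\bigcup_iW_i$. Three pairwise-disjoint families of edges arise. Family~$(1)$: re-run Step~1 with all $i^*$-transversals chosen inside $W$ (feasible since $|W_i|\geq k-1$ and $|W_n\setminus(e(u)\cup\{\bar v\})|\geq k-2$); this gives $|V_{i^*}|(k-1)$ edges contained in $W$. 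Family~$(2)$: apply the parity property to $e(u^*)$, to an $n$-transversal $T^*\subseteq W$ disjoint from $e(u^*)$ and from the neighbourhood of $v^*$ in $\Omega$ (feasible because $e(u^*)\setminus\{v^*\}$ is contained in that neighbourhood and the neighbourhood meets each $V_i$, $i\neq n$, in at most $k-2<|W_i|$ vertices, and similarly at most $|V_{i^*}|-1$ in $V_{i^*}$), and to each $v''\in V_n\setminus W_n$; as in Lemma~\ref{lem:1in1}, the resulting edge contains $v''$ (not $v^*$) and has $V_{n-1}$-vertex in $W_{n-1}$, yielding $|V_n|-|W_n|$ new edges that do not meet $V_{n-1}\setminus W_{n-1}$. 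Family~$(3)$: each vertex of $V_{n-1}\setminus W_{n-1}$ is in at least one edge, giving $|V_{n-1}|-|W_{n-1}|$ more edges. The three families have pairwise distinct supports in $V_n$ or $V_{n-1}$, so they are disjoint, and since $|W_n|,|W_{n-1}|\leq k$,
\[
|E|\geq |V_{i^*}|(k-1)+(|V_n|-|W_n|)+(|V_{n-1}|-|W_{n-1}|)\geq|V_{i^*}|(k-1)+|V_{n-1}|+|V_n|-2k.
\]

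\noindent\textbf{Main obstacle.}
The delicate point is the simultaneous choice of the edges $e(u)$ and the sets $W_i$ so that Step~1's parity can be performed inside $W$ while Family~$(2)$ still finds a transversal $T^*\subseteq W$ avoiding the neighbourhood of $v^*$; the key enabling observation is $e(u^*)\setminus\{v^*\}\subseteq N_\Omega(v^*)$, which collapses the two constraints. The only edge case requiring extra care is $|V_{n-1}|=k-1$ (which occurs only when $z=n-1$), forcing $|W_{n-1}|=k-1$ and making Family~$(3)$ empty; but a direct computation shows Families~$(1)$ and $(2)$ alone already reach the (then slightly relaxed) target.
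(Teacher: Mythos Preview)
Your argument is correct and follows essentially the same strategy as the paper's: use the two forcing arcs $(v,w),(v',w')$ together with the weak parity property to show that every vertex of $V_{i^*}$ has degree at least $k-1$, and then pick up the remaining edges needed to cover $V_n\setminus W_n$ and $V_{n-1}\setminus W_{n-1}$.

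The organizational differences are minor but worth noting. The paper splits first on $|V_{i^*}|\in\{k-1,k\}$ (two nearly identical cases) and then, within each, on $\deg_\Omega(v^*)\geq 2$ versus $\deg_\Omega(v^*)=1$; in the latter sub-case any $n$-transversal in $W$ works because $e(w^*)$ is the unique edge through $v^*$. You instead treat both values of $|V_{i^*}|$ uniformly and split on $\deg_\Omega(v^*)\geq k-1$ versus $\leq k-2$: your easy case is disposed of by the crude bound $|E|\geq|V_n|(k-1)$, while in your main case you must choose $T^*$ avoiding the whole neighbourhood $N_\Omega(v^*)$---precisely the device used in the paper's proof of Lemma~\ref{lem:1in1}. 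So you have traded the paper's simpler transversal choice for a simpler ``large-degree'' case. Two small points: the pair ``$(w',w')$'' is a typo for $(w',v')$, and your ``routine arithmetic check'' $|V_n|(k-1)\geq|V_{i^*}|(k-1)+|V_{n-1}|+|V_n|-2k$ actually needs the observation that $|V_{i^*}|=k$ forces $z\leq k-2$ and hence $|V_{n-1}|\geq k$ (with only $|V_{n-1}|\geq k-1$ the inequality can fail); under the lemma's hypotheses this implication holds, so the bound is valid.
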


\begin{proof}
Let $u$ and $u'$ be the two vertices in $V_{i^*}$ with $(v,u)$ and $(v',u')$ forming arcs in $D(\Omega)$. Note that according to the basic properties of $D(\Omega)$, we have $u\neq u'$.\\

\noindent 
{\em Case $(a)$:} $|V_{i^*}|=k$. For each vertex $w\in V_{i^*}$, choose an edge $e(w)$ containing $w$. We can assume that there is a vertex $w^*\in V_{i^*}$ such that $e(w^*)$ contains a vertex $v^*$ in $V_n$ of minimal degree in $\Omega$. Choose $W_i\subseteq V_i$ such that $|W_i|=k-1$ for $i=1,\ldots,z$, $|W_i|=k$ for $i=z+1,\ldots,n$, and 
$$\bigcup_{w\in V_{i^*}} e(w) \subseteq W=\bigcup_{i=1}^{n} W_i.$$
\noindent We first show that the degree of any vertex in $V_{i^*}$ is at least $k-1$ in the hypergraph induced by $W$. Pick $w\in V_{i^*}$ and consider $e(w)$. If $v\in e(w)$, take $k-2$ disjoint $i^*$-transversals in $W$ not containing $v'$ and not intersecting with $e(w)$.  Applying the parity property to $e(w)$, $u'$, and each of those $i^*$-transversals yields, in addition to $e(w)$, at least $k-2$ edges containing $w$. Otherwise, take $k-2$ disjoint $i^*$-transversals in $W$ not containing $v$ and not intersecting with $e(w)$, and apply the parity property to $e(w)$, $u$, and each of those $i^*$-transversals. Therefore, in both cases, the degree of $w$ in the hypergraph induced by $W$ is at least $k-1$.\\

\noindent Then, we add edges not contained in $W$.
If the degree of $v^*$ in $\Omega$ is at least $2$, there are at least  $2(|V_n|-k)$ distinct edges intersecting $V_n\setminus W_n$.
Otherwise, the parity property applied to $e(w^*)$, any $n$-transversal in $W$, and each vertex in $V_n\setminus W_n$ provides $|V_n|-k$ additional edges not intersecting $V_{n-1}\setminus W_{n-1}$. Therefore, an additional $|V_{n-1}|-k$  edges are needed to cover these vertices of  $V_{n-1}\setminus W_{n-1}$.\\

\noindent In total, we have at least $k(k-1)+|V_{n-1}|+|V_{n}|-2k$ 
edges.\\

\noindent 
{\em Case $(b)$:} $|V_{i^*}|=k-1$.  For each vertex $w\in V_{i^*}$, choose an edge $e(w)$ containing $w$. We can assume that there is a vertex $w^*\in V_{i^*}$ such that $e(w^*)$ contains a vertex $v^*$ in $V_n$ of minimal degree in $\Omega$. Choose $W_i\subseteq V_i$ such that $|W_i|=k-1$ for $i=1,\ldots,n-1$, $|W_n|=k$, and 
$$\bigcup_{w\in V_{i^*}} e(w) \subseteq W=\bigcup_{i=1}^{n} W_i.$$ 
 \noindent
Similarly, we show that the degree of any vertex in $V_{i^*}$ is at least $k-1$ in the hypergraph induced by $W$.  Pick $w\in V_{i^*}$ and consider $e(w)$. If $v\in e(w)$, take $k-2$ disjoint $i^*$-transversals in $W$ not containing $v'$ and not intersecting with $e(w)$.  Applying the parity property to $e(w)$, $u'$, and each of those $i^*$-transversals yields, in addition to $e(w)$, at least $k-2$ edges containing $w$. Otherwise, take $k-2$ disjoint $i^*$-transversals in $W$ not containing $v$ and not intersecting with $e(w)$, and apply the parity property to $e(w)$, $u$, and each of those $i^*$-transversals. Therefore, in both cases, the degree of $w$ in the hypergraph induced by $W$ is at least $k-1$.\\

\noindent Then, we add edges not contained in $W$.
If the degree of $v^*$ in $\Omega$ is at least $2$, there are at least  $2(|V_n|-k)$ distinct edges intersecting $V_n\setminus W_n$.
Otherwise, the parity property applied to $e(w^*)$, any $n$-transversal in $W$, and each vertex in $V_n\setminus W_n$ provides $|V_n|-k$ additional edges not intersecting $V_{n-1}\setminus W_{n-1}$. Therefore, $|V_{n-1}|-k+1$ additional edges are needed to cover these vertices of  $V_{n-1}\setminus W_{n-1}$.\\

\noindent In total, we have at least $(k-1)^2+|V_{n-1}|+|V_{n}|-2k$ 
edges. 
\qquad\end{proof}

\subsection{Proof of the main result}
\noindent 
Theorem~\ref{main} is obtained by setting $(k,z)=(m,0)$ in Proposition~\ref{prop:kzn}. This proposition is proven by induction on the cardinality of octahedral systems of the form illustrated in Figure~\ref{fig:induc}. Either the deletion of a vertex results in an octahedral system satisfying the condition of Proposition~\ref{prop:kzn} and we can apply induction, or we apply Lemma~\ref{lem:1in1} or Lemma~\ref{lem:2in1} to bound the number of edges of the system. Lemma~\ref{induc} is a key tool to determine if the deletion of a vertex results in an octahedral system satisfying the condition of Proposition~\ref{prop:kzn}. 

\begin{figure}
\begin{center}
\includegraphics[scale=0.6]{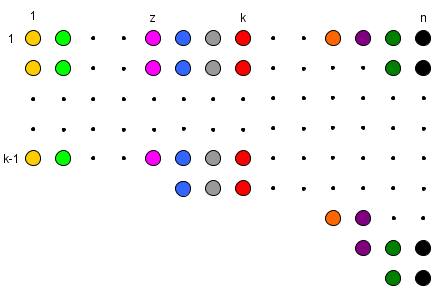}
\caption{The vertex set of the $(k-1,\ldots,k-1,k,\ldots,k,m_{k+1},\ldots,m_n)$-octahedral system $\Omega=(V_1,\ldots,V_n,E)$ used for the proof of Proposition~\ref{prop:kzn}}
\label{fig:induc}
\end{center}
\end{figure}

\begin{proposition}\label{prop:kzn}
A $(\overbrace{k-1,\ldots,k-1}^{z\mbox{ \scriptsize{\rm{times}}}},\overbrace{k,\ldots,k}^{k-z\mbox{ \scriptsize{\rm{times}}}},m_{k+1},\ldots,m_n)$-octahedral system $\Omega=(V_1,\ldots,V_n,E)$ without isolated vertex, with $2\leq k\leq m_{k+1}\leq\ldots\leq m_n$ and $0\leq z<k\leq n$, has at least 
$$\begin{array}{rl}
\vspace{0.1cm}\frac{1}{2}k^2+\frac{1}{2}k-8+|V_{n-1}|+|V_{n}|-z & \mbox{ edges if $k\leq n-2$},\\
\vspace{0.1cm}\frac{1}{2}n^2+\frac{1}{2}n-10+|V_n|-z & \mbox{ edges if $k=n-1$},\\
\frac{1}{2}n^2+\frac{5}{2}n-11-z & \mbox{ edges if $k=n$}.
\end{array}$$
\end{proposition}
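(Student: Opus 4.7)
The plan is to prove Proposition~\ref{prop:kzn} by induction on the total cardinality $\sum_{i=1}^n |V_i|$. The base cases are $k \leq 3$: there the claimed bound reduces to roughly $|V_{n-1}| + |V_n| - O(1)$ and follows directly from Proposition~\ref{prop:nonisolated}. For the inductive step with $k \geq 4$, the main tool is the transitivity of $D(\Omega)$: there is always a nonempty subset $X$ of vertices inducing a complete subgraph in $D(\Omega)$ and having no outneighbour. I proceed by case analysis on where this sink $X$ lies.

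\textbf{Clean case.} When $|X| \geq 2$ and $X \subseteq \bigcup_{i=z+1}^n V_i$, I apply Lemma~\ref{lem:1in1} directly to get at least $(k-1)^2 + |V_{n-1}| + |V_n| - 2k + 1$ edges. Since
\[
(k-1)^2 - 2k + 1 - \left(\frac{1}{2}k^2 + \frac{1}{2}k - 8\right) = \frac{1}{2}(k-4)(k-5) \geq 0
\]
holds for every integer $k$, the resulting bound dominates the claimed quantity in the regime $k \leq n-2$ for any $z \geq 0$. The regimes $k = n-1$ and $k = n$ are verified by substituting the forced values $|V_{n-1}| = k$ (for $k = n-1$) and $|V_{n-1}|, |V_n| \in \{k-1, k\}$ (for $k = n$) and noting that the remaining quadratic in $n$ has no real roots.

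\textbf{Reduction.} Otherwise $|X| = 1$ or $X$ meets a small set $V_j$ with $j \leq z$. In the first subcase, $X = \{v\}$ with $v \in V_i$; if $|V_i| > k$, or if $|V_i| = k$ with $i > z$, then Lemma~\ref{induc} shows $\Omega \setminus \{v\}$ is a smaller octahedral system without isolated vertex whose shape still fits the hypothesis (with the same $k$ and either the same $z$ or $z$ incremented by one). By induction $\Omega \setminus \{v\}$ has at least the target bound minus one, and $\deg_\Omega(v) \geq 1$ recovers the full bound. When $X$ meets a small set $V_j$ (so deletion would break the shape), I instead invoke Lemma~\ref{lem:2in1}: the sink property of $X$ together with the transitivity of $D(\Omega)$ forces at least two vertices of $V_n$ to share a common outneighbour in $V_j$, giving $|V_j|(k-1) + |V_{n-1}| + |V_n| - 2k$ edges, which again dominates the claimed bound in each of the three regimes by direct arithmetic.

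\textbf{Main obstacle.} The principal difficulty lies in the bookkeeping across the three regime boundaries ($k \leq n-2$, $k = n-1$, $k = n$) and across the parameter transitions $(k, z) \to (k, z+1) \to (k-1, 0)$ produced by the deletions in the reduction step. The passage from $(k, k-1)$ to $(k-1, 0)$ is particularly delicate, since the target formula switches branch at this boundary and one must check monotonicity across the switch. A secondary subtle point is ensuring that when $X$ meets only a small set $V_j$, the hypothesis of Lemma~\ref{lem:2in1} is genuinely forced rather than merely plausible; this likely requires an additional subcase treatment (e.g., a degree or inneighbour argument on the vertex of $X \cap V_j$) when no two vertices of $V_n$ share such an outneighbour, falling back on a direct application of the parity property. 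All remaining verifications are routine arithmetic and combinatorial checks, but must be executed carefully to ensure the induction closes without gap.
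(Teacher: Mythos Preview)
Your overall strategy---induction on $\sum_i |V_i|$, with Lemma~\ref{lem:1in1} handling the case of a large sink $X$, Lemma~\ref{lem:2in1} handling a structural obstruction in $D(\Omega)$, and Lemma~\ref{induc} enabling the deletion step---is exactly the paper's approach. The difference is purely organizational, but that difference is where your gap lies.

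You begin by picking an \emph{arbitrary} sink set $X$ and then branch on whether $X$ avoids the small classes $V_1,\ldots,V_z$. When $X$ meets some $V_j$ with $j\le z$, you assert that ``the sink property of $X$ together with the transitivity of $D(\Omega)$ forces at least two vertices of $V_n$ to share a common outneighbour in $V_j$,'' i.e.\ that the hypothesis of Lemma~\ref{lem:2in1} is automatically satisfied. This implication is not valid: a vertex $u\in V_j$ being a sink says only that no other vertex has all its edges passing through $u$; it gives no information about how many vertices of $V_n$ have outneighbours in $V_j$. You flag this yourself as a ``secondary subtle point'' requiring ``additional subcase treatment,'' but no such treatment is supplied, and a direct parity argument does not obviously close the gap.

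The paper resolves this cleanly by reversing the order of the dichotomy. It first checks whether the hypothesis of Lemma~\ref{lem:2in1} holds (two vertices of $V_n$ with outneighbours in the same $V_{i^*}$, $i^*<k$). If so, that lemma finishes the case. If not, then for each $i<k$ at most one vertex of $V_n$ has an outneighbour in $V_i$; since there are only $k-1$ such indices and $|V_n|\ge k$, pigeonhole yields a vertex $x\in V_n$ with no outneighbour in $\bigcup_{i<k}V_i$. Starting the sink search from this particular $x$ and using transitivity guarantees that the resulting $X$ lies entirely in $\bigcup_{i\ge k}V_i$, so the problematic ``$X$ meets a small set'' branch simply never occurs. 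With this reorganization your remaining bookkeeping (the $(k,z)\to(k,z+1)\to(k-1,0)$ transitions and the three regime boundaries) goes through exactly as in the paper.
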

\begin{proof}
The proof works by induction on $\sum_{i=1}^n|V_i|$.
The base case is $\sum_{i=1}^n|V_i|=2n$, which implies $z=0$ and $k=|V_{n-1}|=|V_{n}|=2$. The three inequalities trivially hold in this case.\\

\noindent 
Suppose that $\sum_{i=1}^n|V_i|>2n$. If $k=2$, Proposition~\ref{prop:nonisolated} proves the inequality. We can thus assume that $k\geq 3$.  We choose a pair $(k,z)$ compatible with $\Omega$ -- note that $(k,z)$ is not necessarily unique -- and we consider the two possible cases for the associated $D(\Omega)$.\\

\noindent 
If there are at least two vertices $v$ and $v'$ of $V_n$ having an outneighbour in the same $V_{i^*}$, with $i^*<k$, we can apply Lemma~\ref{lem:2in1}. If $k\leq n-2$, the inequality follows by a straightforward computation; if $k=n-1$, we use the fact that $|V_{n-1}|=n-1$; and if $k=n$, we use the fact that $|V_{n-1}|\geq n-1$ and $|V_n|=n$.\\

\noindent
Otherwise, for each $i<k$, there is at most one vertex of $V_n$ having an outneighbour in $V_i$.  Since $k-1<|V_n|$, there is a vertex $x$ of $V_n$ having no outneighbour in $\bigcup_{i=1}^{k-1}V_i$. Start from $x$ in $D(\Omega)$ till reaching a set $X$ inducing a complete subgraph of $D(\Omega)$ without outneighbour. Since $D(\Omega)$ is transitive, we have $X\subseteq\bigcup_{i=k}^n V_i$. If $|X|\geq 2$, we apply Lemma~\ref{lem:1in1}. Thus, we can assume that $|X|=1$.\\

\noindent
The hypergraph $\Omega'$ induced by $\Omega\setminus X$ is an octahedral system without isolated vertex since $X$ is a single vertex without outneighbour in $D(\Omega)$, see Lemma~\ref{induc}. Recall that the vertex in $X$ belongs to $\bigcup_{i=k}^nV_{i}$. Let $(k',z')$ be possible parameters associated to $\Omega'$ determined hereafter. Let $i_0$ be such that $X\subseteq V_{i_0}$. The induction argument is applied to the different values of $|V_{i_0}|$.\\

\noindent If $|V_{i_0}|\geq k+1$, we have $(k',z')=(k,z)$ and we can apply the induction hypothesis with 
$|V_{n-1}|+|V_{n}|$ decreasing by at most one (in case $i_0=n-1$ or $n$) which is compensated by the edge containing $X$.\\

\noindent If $|V_{i_0}|= k$, $z<k-1$, and $k<n$, we have $(k',z')=(k,z+1)$ and we can apply the induction hypothesis with same
$|V_{n-1}|$ and $|V_{n}|$ since $z< n-2$, while $z'$ replacing $z$ takes away 1 which is compensated by the edge containing $X$.\\

\noindent If $|V_{i_0}|= k$, $z=k-1$, and $k<n-1$,
we have $(k',z')=(k-1,0)$ and we can apply the induction hypothesis with same
$|V_{n-1}|+|V_{n}|$ since $z< n-2$. We get therefore $\frac{1}{2}(k-1)^2+\frac{1}{2}(k-1)-8+|V_{n-1}|+|V_n|$ edges in $\Omega'$, plus at least one containing $X$. In total, we have $\frac{1}{2}k^2+\frac{1}{2}k-8+|V_{n-1}|+|V_n|-k+1$ edges in $\Omega$, as required.\\

\noindent If $|V_{i_0}|= k$, $z=k-1$, and $k=n-1$,
we have $(k',z')=(n-2,0)$ and we can apply the induction hypothesis with
$|V_{n-1}|+|V_{n}|$ decreasing by at most one. We get therefore $\frac{1}{2}(n-2)^2+\frac{1}{2}(n-2)-8+|V_{n-1}|+|V_n|-1$ edges in $\Omega'$, plus at least one containing $X$. Since $|V_{n-1}|=n-1$, we have in total $\frac 1 2 n^2+\frac 1 2 n-10+|V_n|-(n-2)$ edges in $\Omega$, as required.\\

\noindent If $|V_{i_0}|= k$, $z=k-1$, and $k=n$,
we have then necessarily $i_0=n$ and $(k',z')=(n-1,0)$. We can apply the induction hypothesis and get therefore $\frac 1 2 n^2+\frac 1 2 n-10+(n-1)$ edges in $\Omega'$, plus at least one containing $X$. In total, we have $\frac 1 2 n^2+\frac 5 2 n-11-(n-1)$ edges in $\Omega$, as required.\\

\noindent If $|V_{i_0}|= k$, $z\leq k-2$, and $k=n$,
we can assume $i_0=n-1$ and $(k',z')=(n,z+1)$. We can apply the induction hypothesis and get therefore $\frac 1 2 n^2+\frac 5 2 n-11-z-1$ edges in $\Omega'$, plus at least one containing $X$. In total, we have $\frac 1 2 n^2+\frac 5 2 n-11-z$ edges in $\Omega$, as required.
\qquad\end{proof}

\begin{remark}{\rm
A similar analysis, with $|V_i|=n$ for all $i$ as a base case, shows that an octahedral system without isolated vertex and with $|V_1|=|V_2|=\ldots=|V_n|=m$ has at least  $nm-\frac{1}{2}n^2+\frac{5}{2}n-11$ edges for $4\leq n\leq m$.}
\end{remark}

\section{Small instances and $\mu(4)=17$}\label{mu4}
\noindent
This section focuses on octahedral systems with $m_i$'s and $n$ at most $5$. 

\begin{proposition}\label{prop:3333}
$\nu(3,3,3,3)=6$.
\end{proposition}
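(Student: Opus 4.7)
The upper bound $\nu(3,3,3,3)\leq 6$ is immediate from Proposition~\ref{prop:upperbound_gen} applied to $m_1=m_2=m_3=m_4=3$, giving $2+4(3-2)=6$. For the lower bound, let $\Omega=(V_1,\ldots,V_4,E)$ be a $(3,3,3,3)$-octahedral system without isolated vertex, and pick a nonempty set $X$ inducing a complete subgraph of $D(\Omega)$ with $N_{D(\Omega)}^+(X)=\emptyset$; such a set exists by transitivity. If $|X|\geq 2$, Lemma~\ref{tech} applied with $k=3$ and $z=0$ yields $|E|\geq (k-1)^2+2=6$ directly, since $\Omega$ is not a $(2,2,3,3)$-octahedral system.

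If $|X|=1$, write $X=\{x\}$ and set $\Omega'=\Omega\setminus\{x\}$. By Lemma~\ref{induc}, $\Omega'$ is, after relabeling, a $(2,3,3,3)$-octahedral system without isolated vertex, and $|E(\Omega)|\geq |E(\Omega')|+1$ since $x$ belongs to at least one edge. The proof therefore reduces to establishing the auxiliary bound $\nu(2,3,3,3)\geq 5$. I would iterate the same strategy on $D(\Omega')$: writing $V_1$ for the size-2 part of $\Omega'$, a sink set $X'\subseteq V_2\cup V_3\cup V_4$ with $|X'|\geq 2$ yields $|E(\Omega')|\geq 6$ via Lemma~\ref{tech} with $k=3$ and $z=1$, while a singleton sink $X'=\{x'\}$ with $x'\in V_i$ for some $i\geq 2$ lets Lemma~\ref{induc} produce a $(2,2,3,3)$-octahedral system without isolated vertex, which by Proposition~\ref{prop:222m} has at least $\nu(2,2,3,3)=4$ edges, so $|E(\Omega')|\geq 5$.

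The main obstacle is the residual situation in which every sink set of $D(\Omega')$ meets $V_1$, most notably a singleton sink $\{x'\}$ with $x'\in V_1$: removing $x'$ would shrink $V_1$ to a single element and Lemma~\ref{induc} no longer produces an octahedral system under the definition. I plan to finish this case by a direct parity argument. The absence of outneighbours of $x'$ forces every vertex of $V_2\cup V_3\cup V_4$ to belong to some edge containing the other element $y'\in V_1$, so the edges through $y'$ cover $V_2\cup V_3\cup V_4$ and number at least three; combined with $\deg(x')\geq 1$, this already gives $|E(\Omega')|\geq 4$. Assuming $|E(\Omega')|=4$ forces exactly three edges through $y'$ that are pairwise disjoint on $V_2\cup V_3\cup V_4$ together with a single edge through $x'$; then applying the parity property to the subsets $V_1\cup Y$ with $|Y\cap V_i|=2$ for $i\geq 2$ reveals that no choice for the edge through $x'$ can match the parities dictated by the three edges through $y'$, a contradiction. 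The remaining subcase $|X'|=2$ with $X'\cap V_1\neq\emptyset$ is handled analogously, exploiting the fact that the two vertices of $X'$ appear in exactly the same edges of $\Omega'$ and invoking the parity property on the analogous family of subsets.
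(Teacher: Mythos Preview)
Your overall strategy matches the paper's: apply Lemma~\ref{tech} when the sink has size at least~$2$, and delete-and-recurse when the sink is a singleton, reducing $\nu(3,3,3,3)$ to $\nu(2,3,3,3)$ and then to $\nu(2,2,3,3)$. The divergence is only in the $(2,3,3,3)$ step, where you allow for sinks meeting the size-$2$ class $V_1$ and propose an ad~hoc parity analysis to handle them.

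The paper avoids this detour with a single observation: in a $(2,3,3,3)$-system without isolated vertex, at most one vertex of $V_4$ can have an outneighbour in $V_1$. Indeed, two such vertices would have \emph{distinct} outneighbours in $V_1$ (a vertex cannot have two inneighbours in the same $V_i$), exhausting $V_1$; the third vertex of $V_4$ would then lie in no edge, since any edge through it meets $V_1$ and hence already contains one of the first two. Consequently some $v\in V_4$ has no outneighbour in $V_1$, and by transitivity the sink reachable from $v$ lies entirely in $V_2\cup V_3\cup V_4$. Your residual situation is therefore vacuous.

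This matters because your residual analysis is incomplete as written: the singleton-sink case is fine (the parity contradiction for three pairwise-disjoint $y'$-edges plus one $x'$-edge does go through), but the $|X'|=2$ case is only a sketch, and sinks of size $3$ or $4$ meeting $V_1$ are not addressed at all. The paper's one-line observation both simplifies and closes the argument.
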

\begin{proof}
We first prove that $\nu(2,3,3,3)=5$. Let $\Omega=(V_1,V_2,V_3,V_4,E)$ be a $(2,3,3,3)$-octahedral system. In $D(\Omega)$ there is at most one vertex of $V_4$ having an outneighbour in $V_1$, otherwise one vertex of $V_4$ would be isolated.
Thus, there is a subset $X\subseteq V_2\cup V_3\cup V_4$ inducing in $D(\Omega)$
a complete subgraph without outneighbour.
If $|X|\geq 2$,  applying Lemma~\ref{tech} with $(k,z)=(3,1)$ gives at least $5$ edges in that case.
If  $|X|= 1$, deleting $X$ yields a $(2,2,3,3)$-octahedral system without isolated vertex since $X$ has no outneighbour in $D(\Omega)$.
As $\nu(2,2,3,3)=4$ by Proposition~\ref{prop:222m}, we have at least $4+1=5$ edges.
Thus, the equality holds since $\nu(2,3,3,3)\leq 5$ by Proposition~\ref{prop:upperbound_gen}.\\

\noindent
We then prove that $\nu(3,3,3,3)=6$. Let $\Omega=(V_1,V_2,V_3,V_4,E)$ be a $(3,3,3,3)$-octahedral system. There is a subset $X$ inducing in $D(\Omega)$ a complete subgraph without outneighbour.
If $|X|\geq 2$,  apply Lemma~\ref{tech} with $(k,z)=(3,0)$ gives at least $6$ edges in that case.
If  $|X|= 1$, deleting $X$ yields a $(2,3,3,3)$-octahedral system without isolated vertex since $X$ has no outneighbour in $D(\Omega)$.
As $\nu(2,3,3,3)=5$, we have at least $5+1=6$ edges.
Thus, the equality holds since $\nu(3,3,3,3)\leq 6$ by Proposition~\ref{prop:upperbound_gen}.
\qquad\end{proof}

The main result this section, namely  $\nu(5,5,5,5,5)=17$, is proven via a series of claims dealing with octahedral systems of increasing sizes. We first determine the values of $\nu(2,2,3,3,3)$, $\nu(2,3,3,3,3)$, and $\nu(3,3,3,3,3)$ in Claims~\ref{claim:22333},~\ref{claim:23333}, and~\ref{claim:33333}.
\begin{claim}\label{claim:22333}
$\nu(2,2,3,3,3)=5$.
\end{claim}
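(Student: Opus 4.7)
The plan is to follow the template of Proposition~\ref{prop:3333}. The upper bound $\nu(2,2,3,3,3)\leq 5$ is immediate from Proposition~\ref{prop:upperbound_gen}, so the task reduces to showing that every $(2,2,3,3,3)$-octahedral system $\Omega=(V_1,\ldots,V_5,E)$ without isolated vertex has at least $5$ edges. The strategy is to locate a subset $X\subseteq V_3\cup V_4\cup V_5$ inducing a complete subgraph in $D(\Omega)$ without outneighbour, and then split according to the size of $X$.

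The key preliminary step is to show that such an $X$ exists. Since $|V_1|=2<|V_5|=3$, at most one vertex of $V_5$ can have an outneighbour in $V_1$: otherwise two arcs $(v,u),(v',u')\in A$ with $v,v'\in V_5$ distinct would force $u\neq u'$ (a vertex cannot have two inneighbours in the same $V_i$) and hence $\{u,u'\}=V_1$, so every edge would contain either $v$ or $v'$, leaving the third vertex of $V_5$ isolated. The same argument applies to $V_2$, so at least one $x\in V_5$ has no outneighbour in $V_1\cup V_2$. By transitivity of $D(\Omega)$, every vertex reachable from $x$ lies in $V_3\cup V_4\cup V_5$; taking $X$ to be a sink strongly connected component of $D(\Omega)$ reachable from $x$ then yields the desired set (being an SCC of a transitive digraph, it automatically induces a complete subgraph).

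With $X$ in hand I would split on $|X|$. If $|X|\geq 2$, Lemma~\ref{tech} with $(k,z)=(3,2)$ gives at least $(k-1)^2+2=6$ edges (the $(2,2,3,3)$-exception does not apply since $\Omega$ has five colour classes). If $|X|=1$, Lemma~\ref{induc} allows me to delete the unique vertex $v$ of $X$ to obtain an octahedral system without isolated vertex of type $(2,2,2,3,3)$ after reordering, which has at least $3+3-2=4$ edges by Proposition~\ref{prop:222m}; adding back the (at least one) edge through $v$ yields the desired total of at least $5$ edges. The main obstacle is the preliminary argument locating $X$ inside $V_3\cup V_4\cup V_5$; once that is in place, the rest consists of direct appeals to the lemmas and earlier propositions already established in the paper.
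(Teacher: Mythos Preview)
Your proposal is correct and follows essentially the same argument as the paper: locate $X\subseteq V_3\cup V_4\cup V_5$ by noting that at most one vertex of $V_5$ can have an outneighbour in each of $V_1,V_2$, then split on $|X|$ using Lemma~\ref{tech} for $|X|\geq 2$ and Proposition~\ref{prop:222m} after deletion for $|X|=1$. Your observation that Lemma~\ref{tech} actually yields $6$ edges (not just $5$) in the $|X|\geq 2$ case is a harmless sharpening.
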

\begin{proof} 
For $i=1$ and $2$, there is at most one vertex of $V_5$ having an outneighbour in $V_i$ as otherwise one vertex of $V_5$ would be isolated. Since $|V_5|= 3$, there is a vertex of $V_5$ having no outneighbour in $V_1\cup V_2$. Thus, there is a subset $X\subseteq V_3\cup V_4\cup V_5$ of cardinality $1$, $2$, or $3$ inducing a complete subgraph in $D(\Omega)$ without outneighbour.
If $|X|\geq 2$,  applying Lemma~\ref{tech} with $(k,z)=(3,2)$ gives at least $5$ edges. If  $|X|= 1$, deleting $X$ yields a $(2,2,2,3,3)$-octahedral system without isolated vertex since $X$ has no outneighbour in $D(\Omega)$. As $\nu(2,2,2,3,3)=4$ by Proposition~\ref{prop:222m}, we have at least $4+1=5$ edges. Thus, the equality holds since $\nu(2,2,3,3,3)\leq 5$ by Proposition~\ref{prop:upperbound_gen}.
\qquad\end{proof}
\begin{claim}\label{claim:23333}
$\nu(2,3,3,3,3)=6$.
\end{claim}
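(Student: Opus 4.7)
The plan is to follow the template established in Claim~\ref{claim:22333}. The upper bound $\nu(2,3,3,3,3) \le 6$ will come immediately from Proposition~\ref{prop:upperbound_gen}, which evaluates to $2 + (2-2) + 4(3-2) = 6$. For the lower bound, I fix an arbitrary $(2,3,3,3,3)$-octahedral system $\Omega = (V_1,\ldots,V_5,E)$ without isolated vertex, with $|V_1|=2$, and analyse its transitive digraph $D(\Omega)$.

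First, I would show that some vertex $x \in V_5$ has no outneighbour in $V_1$. The key observation, already used for Claim~\ref{claim:22333}, is that since no vertex of $D(\Omega)$ can have two distinct inneighbours in the same $V_i$, distinct vertices of $V_5$ with outneighbours in $V_1$ must have distinct outneighbours. If two vertices of $V_5$ had outneighbours in $V_1$, they would monopolise the two vertices of $V_1$, and the third vertex of $V_5$ would then fail to belong to any edge, contradicting the no-isolated-vertex hypothesis. Starting from such an $x$ and following outneighbours in $D(\Omega)$, transitivity guarantees that the walk never reaches $V_1$, and eventually terminates at a set $X \subseteq V_2 \cup V_3 \cup V_4 \cup V_5$ inducing a complete subgraph of $D(\Omega)$ without outneighbour.

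Next, I split on $|X|$. In the case $|X| \geq 2$, I would invoke Lemma~\ref{tech} with parameters $(k,z) = (3,1)$: this matches the profile $(2,3,3,3,3)$ with one set of size $k-1=2$ and four of size $k=3$, the condition $X \subseteq \bigcup_{i=z+1}^{n} V_i$ holds by construction, and since $\Omega$ is not a $(2,2,3,3)$-system the conclusion is $|E| \geq (k-1)^2 + 2 = 6$. In the case $|X| = 1$, write $X = \{x_0\}$ with $x_0 \in V_{i_0}$ for some $i_0 \in \{2,3,4,5\}$. Then Lemma~\ref{induc} ensures that $\Omega \setminus X$ is an octahedral system without isolated vertex whose sorted profile is $(2,2,3,3,3)$, so Claim~\ref{claim:22333} provides at least $5$ edges; adjoining the edge through $x_0$ (which exists since $x_0$ is not isolated in $\Omega$) yields at least $6$ edges in $\Omega$.

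Since this is a straightforward variant of Claim~\ref{claim:22333} with the profile incremented by one, I do not anticipate any real obstacle. The only points requiring care are verifying that the parameter choice $(k,z)=(3,1)$ correctly aligns with the indexing of $\Omega$ in Lemma~\ref{tech}, and confirming that after deleting a single vertex from any one of $V_2,\ldots,V_5$ the resulting profile is indeed, up to reordering, $(2,2,3,3,3)$, so that Claim~\ref{claim:22333} is applicable.
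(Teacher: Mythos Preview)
Your argument is correct and follows the same template as the paper---locate a complete subgraph $X$ of $D(\Omega)$ without outneighbour lying outside the ``small'' class(es), then split on $|X|$---but it is organised more economically than the paper's own proof. The paper first runs a separate Case~(a): if two vertices of $V_5$ have outneighbours in the same $V_{i^*}$ with $i^*\in\{1,2\}$, it invokes Lemma~\ref{lem:2in1}; only in the complementary Case~(b) does it find $X\subseteq V_3\cup V_4\cup V_5$ and apply Lemma~\ref{tech}. You observe that it is enough to keep $X$ out of $V_1$ alone, so $X\subseteq V_2\cup V_3\cup V_4\cup V_5$, and this already matches the hypothesis $X\subseteq\bigcup_{i\geq z+1}V_i$ of Lemma~\ref{tech} for $(k,z)=(3,1)$. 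Hence Lemma~\ref{lem:2in1} is never needed here. Both routes give the bound $6$; yours is shorter and uses one fewer lemma.
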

\begin{proof} 
We consider  the two possible cases for the associated $D(\Omega)$.\\\\
{\em Case $(a)$:} there are at least two vertices $v$ and $v'$ of $V_5$ having outneighbours in the same $V_{i^*}$ in $D(\Omega)$ with $i^*=1$ or $2$. 
Note that actually $i^*=2$ since otherwise $V_5\setminus\{v,v'\}$ would be isolated.
Applying Lemma~\ref{lem:2in1} with $(k,z)=(3,1)$ gives at least $3\times 2+|V_4|+|V_5|-6=6$ edges.\\\\
{\em Case $(b)$:} there is at most one vertex of $V_5$ having an outneighbour in $V_i$ for $i=1$ and $2$ in $D(\Omega)$. 
Since $|V_5|=3$, there is a vertex of $V_5$ having no outneighbour in $V_1\cup V_2$. Thus, there is a subset $X\subseteq V_3\cup V_4\cup V_5$ inducing in $D(\Omega)$ a complete subgraph without outneighbour. 
If $|X|\geq 2$,  applying Lemma~\ref{tech} with $(k,z)=(3,1)$ and $j=2$ gives at least $6$ edges. 
If  $|X|= 1$, deleting $X$ yields a $(2,2,3,3,3)$-octahedral system without isolated vertex since $X$ has no outneighbour in $D(\Omega)$.
As $\nu(2,2,3,3,3)=5$ by Claim~\ref{claim:22333}, we have at least $5+1=6$ edges.\\\\
Thus, the equality holds since $\nu(2,3,3,3,3)\leq 6$ by Proposition~\ref{prop:upperbound_gen}.
\qquad\end{proof}
\begin{claim}\label{claim:33333}
$\nu(3,3,3,3,3)=7$.
\end{claim}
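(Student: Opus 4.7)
The upper bound $\nu(3,3,3,3,3)\leq 7$ follows at once from Proposition~\ref{prop:upperbound_gen}. For the lower bound, let $\Omega=(V_1,\ldots,V_5,E)$ be a $(3,3,3,3,3)$-octahedral system without isolated vertex. My plan is to mirror the case analysis of Claim~\ref{claim:23333} on the transitive digraph $D(\Omega)$, invoking a further vertex deletion whenever the direct application of Lemma~\ref{lem:2in1} or Lemma~\ref{tech} yields only $6$ edges.

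First, suppose there exist two vertices $v,v'\in V_5$ with outneighbours $u,u'$ in a common $V_{i^*}$ with $i^*\in\{1,2\}$. Since no vertex of $D(\Omega)$ admits two distinct inneighbours in the same $V_i$ and the third vertex $v''\in V_5$ is not isolated, every edge through $v''$ must use the third vertex $u''\in V_{i^*}$, so that $(u'',v'')$ is an arc. Deleting $u''$ therefore isolates $v''$ as well; removing both yields a $(2,2,3,3,3)$-octahedral system after reordering, and the only edges of $\Omega$ eliminated are those containing $u''$ (equivalently $v''$), in total $\deg(u'')$. Provided the reduced system has no isolated vertex, Claim~\ref{claim:22333} gives at least $5$ edges, while Lemma~\ref{lem:2in1} guarantees $\deg(u'')\geq 2$, so $|E|\geq 5+2=7$. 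If further vertices in $V_2\cup V_3\cup V_4$ become isolated (necessarily sharing all their edges with $u''$), I would continue the deletion and combine Proposition~\ref{prop:222m} with the observation that in the tightest configuration the degrees of these extra vertices are forced to be exactly $1$, leaving no room for the edges of $u$ and $u'$ to be pairwise distinct, producing a contradiction.

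Otherwise, for each $i\in\{1,2\}$ at most one vertex of $V_5$ has an outneighbour in $V_i$, so some vertex of $V_5$ has no outneighbour in $V_1\cup V_2$. Following arcs from it in $D(\Omega)$ leads to a complete subgraph $X\subseteq V_3\cup V_4\cup V_5$ without outneighbour. If $|X|=1$, Lemma~\ref{induc} ensures that $\Omega\setminus X$ is a $(2,3,3,3,3)$-octahedral system without isolated vertex, and Claim~\ref{claim:23333} together with the edge through $X$ delivers $|E|\geq 7$. If $|X|\geq 2$, Lemma~\ref{tech} alone supplies only $6$ edges, and the seventh is extracted by the same double-deletion strategy as above: the vertices of $X$ being mutually outneighbour-free force enough arcs to allow removing $X$ together with any additionally isolated outneighbours, after which Claim~\ref{claim:22333} for a $(2,2,3,3,3)$-system, or Proposition~\ref{prop:222m} for a $(2,2,2,3,3)$-system, closes the gap. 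The main obstacle throughout is precisely this final ``$+1$'' in the tight configurations; the key insight is that the uniqueness of inneighbours per $V_i$ manufactures enough extra arcs to perform a pair deletion, after which the smaller-instance claims proved earlier in this section produce the required count.
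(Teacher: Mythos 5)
Your overall skeleton (dichotomy on $D(\Omega)$, deletion of sink sets plus the smaller-instance claims) is the right family of tools, but the proposal has a genuine gap at exactly the point you flag as ``the final $+1$'', and the patch you sketch cannot work as described. In the branch where $|X|\geq 2$: since $X$ induces a complete subgraph of $D(\Omega)$ \emph{without outneighbour}, Lemma~\ref{induc} says that deleting $X$ isolates no further vertex at all, so your ``double-deletion strategy'' has nothing extra to delete. If $\deg_{\Omega}(X)=1$, deleting $X=\{x_4,x_5\}$ removes exactly one edge and leaves a $(2,2,3,3,3)$-system; since $\nu(2,2,3,3,3)=5$ is tight (Claim~\ref{claim:22333} together with Proposition~\ref{prop:upperbound_gen}), this route is provably stuck at $5+1=6$ and no combination of Claim~\ref{claim:22333} and Proposition~\ref{prop:222m} can ``close the gap''. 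The paper's proof handles precisely this subcase by a different mechanism you never invoke: it chooses $X$ of \emph{maximal} cardinality among complete subgraphs without outneighbour, applies the parity property to $e(X)$, vertices $w_i\in V_i\setminus e(X)$ for $i=1,2,3$, and all choices $u_4\in V_4\setminus e(X)$, $u_5\in V_5\setminus e(X)$ to get $5$ edges inside $e(X)\cup\{w_1,w_2,w_3\}\cup V_4\cup V_5$, and then uses maximality of $X$ to rule out a single edge covering the three remaining vertices of $V_1,V_2,V_3$, forcing $2$ more edges. The paper also needs separate degree-threshold arguments for $|X|=3,4,5$ (e.g.\ $\deg_\Omega(X)\geq 3$ versus $\leq 2$), none of which your single ``$|X|\geq 2$'' paragraph covers.

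Your first case contains a correct and genuinely nice observation: if $v,v'\in V_5$ have outneighbours $u,u'$ in a common $V_{i^*}$, then every edge through the third vertex $v''\in V_5$ must contain the third vertex $u''\in V_{i^*}$ (an edge through $v''$ containing $u$ or $u'$ would also contain $v$ or $v'$, two vertices of $V_5$), so $(u'',v'')$ is an arc; and $\deg(u'')\geq 2$ can indeed be extracted from the argument inside the proof of Lemma~\ref{lem:2in1} (though not from its statement, which only gives $6$ edges here). But the continuation is asserted rather than proved: deleting $\{u'',v''\}$ may isolate any outneighbour of $u''$, the rule that a vertex has at most one inneighbour per $V_i$ does \emph{not} prevent $u''$ from having two outneighbours in the same class (so iterated deletion can drop a class below size $2$, where the smaller-instance claims no longer apply), and the claimed contradiction about ``no room for the edges of $u$ and $u'$ to be pairwise distinct'' is never derived. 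The paper sidesteps both problems by not using the Lemma~\ref{lem:2in1} dichotomy for this instance at all: it works solely with a maximal sink set $X$ and an eight-way case analysis on $|X|$ and $\deg_\Omega(X)$. As it stands, your proof establishes only $\nu(3,3,3,3,3)\geq 6$.
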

\begin{proof}
There is a subset $X$ inducing a complete subgraph in $D(\Omega)$ without outneighbour. Choose such an $X$ of maximal cardinality. Without loss of generality, we assume that the indices $i$ such that $|X\cap V_i|\neq 0$ are $n-|X|+1,n-|X|+2,\ldots,n$. Consider the different values for $|X|$.
\begin{itemize}
\item[$\bullet$]
If $|X|=1$, deleting $X$ yields a $(2,3,3,3,3)$-octahedral system without isolated vertex since $X$ has no outneighbour in $D(\Omega)$. As $\nu(2,3,3,3,3)=6$ by Claim~\ref{claim:23333}, we have at least $6+1=7$ edges.
\item[$\bullet$] If $|X|=2$ and $\deg_{\Omega}(X)\geq 2$, deleting $X$ yields a $(2,2,3,3,3)$-octahedral system without isolated vertex. As $\nu(2,2,3,3,3)=5$ by Claim~\ref{claim:22333}, we have at least $5+2=7$ edges.
\item[$\bullet$] If $|X|=2$ and $\deg_{\Omega}(X)=1$,  denote $e(X)$ the unique edge containing $X$. 
For  $i=1,2$, and $3$, pick a vertex $w_i$ in $V_i\setminus e(X)$. Applying the parity property to $e(X)$, $w_1$, $w_2$, $w_3$, and any $u_4\in V_4\setminus e(X)$, $u_5\in V_5\setminus e(X)$
 yields at least $5$ edges in $e(X)\cup\{w_1,w_2,w_3\}\cup V_4\cup V_5$. At least $2$ additional edges are needed to cover the $3$ remaining vertices of $V_1$, $V_2$, and $V_3$ since a unique edge containing them would contradict the maximality of $X$. Thus, we have at least $7$ edges.
\item[$\bullet$] If $|X|=3$ and $\deg_{\Omega}(X)\geq 3$, deleting $X$ yields a $(2,2,2,3,3)$-octahedral system without isolated vertex. As $\nu(2,2,2,3,3)=4$ by Proposition~\ref{prop:222m}, we have at least $4+3=7$ edges. 
\item[$\bullet$] If $|X|=3$ and $\deg_{\Omega}(X)\leq 2$,  let $e(X)$ be an edge containing $X$. Pick  $w_1\in V_1\setminus N_{\Omega}(X)$ and $w_2\in V_2\setminus N_{\Omega}(X)$ where $N_{\Omega}(X)$ denotes the vertices not in $X$ contained in the edges intersecting $X$. Applying the parity property to $e(X)$, $w_1$, $w_2$, and any $u_i\in V_i\setminus e(X)$ for $i=3,4$, and $5$  yields at  least $9$ edges in $e(X)\cup\{w_1,w_2\}\cup V_3\cup V_4\cup V_5$.
\item[$\bullet$] If $|X|=4$ and $\deg_{\Omega}(X)\geq 3$, take any vertex $v$ in $V_2\setminus X$. 
Applying the parity property to an edge $e(v)$ containing $v$, $V_2\cap X$, and any $2$-transversal disjoint from $e(v)$ and $X$ shows that $v$ is of degree at least $2$. Since there are $2$ vertices in $V_2\setminus X$, we get, with $3$ edges containing $X$, at least $7$ edges.
\item[$\bullet$] If $|X|=4$ and $\deg_{\Omega}(X)\leq 2$,  let $e(X)$ be an edge containing $X$. Pick $w_1\in V_1\setminus N_{\Omega}(X)$. Applying the parity property to $e(X)$, $w_1$, and any $u_i\in V_i\setminus e(X)$ for $i=2,3,4$, and $5$ yields at least $17$ edges in $e(X)\cup\{w_1\}\cup V_2\cup V_3\cup  V_4\cup V_5$. 
\item[$\bullet$] If $|X|=5$, the parity property applied to the edge $e(X)$ containing $X$, and any $u_i\in V_i\setminus e(X)$ for $i=1,2,3,4$, and $5$ yields  at least $33$ edges.
\end{itemize}
Thus, the equality holds since $\nu(3,3,3,3,3)\leq 7$ by Proposition~\ref{prop:upperbound_gen}.
\qquad\end{proof}

To complete the proof of $\nu(5,5,5,5,5)=17$, we sequentially show $\nu(3,3,3,3,4)\geq 7$, $\nu(4,4,4,4,4)=12$, and finally $\nu(5,5,5,5,5)=17$. A key step consists in proving $\nu(4,4,4,4,4)\geq 11$ 
by induction using $\nu(3,3,3,3,4)\geq 7$ as a base case. We obtain then $\nu(4,4,4,4,4)=12$ by 
Propositions~\ref{parity} and~\ref{prop:upperbound_gen}. The equality $\nu(5,5,5,5,5)=17$ is obtained by induction using $\nu(4,4,4,4,4)=12$ as a base case.
\begin{claim}\label{claim:23334}
$\nu(3,3,3,3,4)\geq 7$.
\end{claim}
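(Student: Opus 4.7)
The plan is to extend the two-case dichotomy used in Claims~\ref{claim:22333},~\ref{claim:23333}, and~\ref{claim:33333}, driven by the structure of the associated transitive graph $D(\Omega)$. Let $\Omega=(V_1,\ldots,V_5,E)$ be a $(3,3,3,3,4)$-octahedral system without isolated vertex; set $(k,z)=(3,0)$ so that $|V_1|=\cdots=|V_4|=k=3$ and $|V_5|=4$. If there exist two vertices $v,v'\in V_5$ whose outneighbours in $D(\Omega)$ lie in the same $V_{i^*}$ with $i^*\in\{1,2\}$, then Lemma~\ref{lem:2in1} directly yields at least $|V_{i^*}|(k-1)+|V_{n-1}|+|V_n|-2k=3\cdot 2+3+4-6=7$ edges.

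Otherwise, for each $i\in\{1,2\}$ at most one vertex of $V_5$ has an outneighbour in $V_i$. Since $|V_5|=4$, there is a vertex $x\in V_5$ with no outneighbour in $V_1\cup V_2$; following arcs from $x$ in $D(\Omega)$ yields a nonempty subset $X\subseteq V_3\cup V_4\cup V_5$ inducing a complete subgraph of $D(\Omega)$ without outneighbour. When $|X|\geq 2$, I would apply Lemma~\ref{lem:1in1} with $(k,z)=(3,0)$: its stated bound $(k-1)^2+|V_{n-1}|+|V_n|-2k+1=6$ is too weak, but rerunning its two internal cases with $k=3$, $|V_{n-1}|=3$, $|V_n|=4$ gives $(k-1)^2+|V_n|+|V_{n-1}|-2(k-1)=7$ and $(k-1)(|V_n|-1)+1=7$ respectively. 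When $|X|=1$, I delete $X$; by Lemma~\ref{induc} the result is still an octahedral system without isolated vertex. If $X\subseteq V_5$, the reduction is a $(3,3,3,3,3)$-system and Claim~\ref{claim:33333} gives $\geq 7$ edges in $\Omega\setminus X$, hence $\geq 8$ in $\Omega$; if $X\subseteq V_3\cup V_4$, the reduction is a $(2,3,3,3,4)$-system, and we are done provided $\nu(2,3,3,3,4)\geq 6$.

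The main obstacle is this auxiliary lower bound $\nu(2,3,3,3,4)\geq 6$, which lies just outside the scope of the earlier claims. I would establish it as a preliminary sub-claim by the same dichotomy with $(k,z)=(3,1)$. In the Case~(a) analogue, $i^*=1$ is ruled out: writing $V_1=\{u_1,u_1'\}$, the arcs $(v,u_1),(v',u_1')$ force every edge through $V_1$ to use $v$ or $v'$ as its $V_5$-vertex, which would isolate the two remaining vertices of $V_5$; so only $i^*=2$ survives and Lemma~\ref{lem:2in1} gives $3\cdot 2+3+4-6=7$ edges. In the Case~(b) analogue, Lemma~\ref{tech} delivers $(k-1)^2+2=6$ when $|X|\geq 2$; when $|X|=1$, deletion yields either a $(2,3,3,3,3)$-system (Claim~\ref{claim:23333} gives $6+1=7$) or a $(2,2,3,3,4)$-system (Proposition~\ref{prop:222m} gives $5+1=6$), closing the sub-claim and hence the proof.
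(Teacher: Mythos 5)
Your overall strategy is essentially the paper's: bootstrap through the auxiliary bound $\nu(2,3,3,3,4)\geq 6$, dichotomize on $D(\Omega)$ according to whether two vertices of $V_5$ share a class of outneighbours (Lemma~\ref{lem:2in1}) or not, and in the latter case locate a terminal complete set $X$ without outneighbour and delete it, falling back on the previously computed values $\nu(2,3,3,3,3)=6$, $\nu(2,2,3,3,3)=5$, and $\nu(3,3,3,3,3)=7$. Your arithmetic throughout checks out, including the observation that the statement of Lemma~\ref{lem:1in1} only yields $6$ while its proof establishes $\min\bigl((k-1)^2+|V_n|+|V_{n-1}|-2(k-1),\,(k-1)(|V_n|-1)+1\bigr)=7$ for $k=3$, $|V_{n-1}|=3$, $|V_n|=4$; invoking the internal case bounds rather than the stated conclusion is legitimate, but you should say explicitly that you are strengthening the lemma for these parameters rather than citing it as stated. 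The paper avoids this manoeuvre entirely: it first splits off the case of a $V_5$-vertex with no outneighbour at all (deleting it and using Claims~\ref{claim:23333} and~\ref{claim:33333}), which forces $X\subseteq V_4\cup V_5$, hence $|X|\leq 2$, in the remaining case; the subcase $|X|=2$ is then handled by deleting both vertices of $X$ and adding the one edge covering $X$, so neither Lemma~\ref{lem:1in1} nor Lemma~\ref{tech} is needed. Your version trades that extra case split for the two lemmas; both routes are valid.

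There is one genuine error in your write-up, though a repairable one: in the sub-claim, for the reduction to a $(2,2,3,3,4)$-system you cite Proposition~\ref{prop:222m} for $\nu(2,2,3,3,4)\geq 5$. That proposition only covers systems of the form $(2,\ldots,2,m_{n-1},m_n)$, i.e.\ $(2,2,2,3,4)$ here, and you cannot pass from $\nu(2,2,2,3,4)=5$ to $\nu(2,2,3,3,4)\geq 5$ by monotonicity, since whether $\nu(m_1,\ldots,m_n)$ is increasing in the $m_i$ is precisely the open Question at the end of Section~\ref{Q&A}. The correct reference is Proposition~\ref{prop:nonisolated}, which gives $\nu(2,2,3,3,4)\geq \max_{i\neq j}(|V_i|+|V_j|)-2=3+4-2=5$ directly; this is exactly what the paper uses at the same point, and with this substitution your proof is complete.
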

\begin{proof}
We first prove 
$\nu(2,3,3,3,4)\geq 6$ which in turn leads to 
$\nu(3,3,3,3,4)\geq 7$. The proof of these two inequalities are quite similar with the main difference being that, while the first inequality relies partially on Proposition~\ref{prop:nonisolated},  the second inequality relies on the first one.\\

\noindent 
Let $\Omega=(V_1,\ldots,V_5,E)$ be a $(2,3,3,3,4)$- or a $(3,3,3,3,4)$-octahedral system. We consider the three possible cases for the associated $D(\Omega)$.\\

\noindent 
{\em Case $(a)$:} there is a vertex of $V_5$ having no outneighbour. Deleting this vertex yields a $(2,3,3,3,3)$- or a $(3,3,3,3,3)$-octahedral system  without isolated vertex. In both cases, we have at least $7$ edges since $\nu(2,3,\ldots,3)=6$ by Claim~\ref{claim:23333}, and $\nu(3,3,3,3,3)=7$ by Claim~\ref{claim:33333}.\\

\noindent
{\em Case $(b)$:} each vertex of $V_5$ has an outneighbour and there are at least two vertices $v$ and $v'$ of $V_5$ having outneighbours in the same $V_{i^*}$ in $D(\Omega)$ with $i^*=1,2$, or $3$. 
Note that $|V_{i^*}|=3$ since otherwise $V_5\setminus\{v,v'\}$ would be isolated. Applying Lemma~\ref{lem:2in1} with either $(k,z)=(3,1)$ or $(k,z)=(3,0)$ gives at least $3\times 2+|V_4|+|V_5|-6=7$ edges.\\

\noindent
{\em Case $(c)$:}  each vertex of $V_5$ has an outneighbour and there is at most one vertex of $V_5$ having an outneighbour in $V_i$ for $i=1,2$, and $3$. 
Since $|V_5|=4$, there is a subset $X\subseteq V_4\cup V_5$ inducing in $D(\Omega)$ a complete subgraph of cardinality $1$ or $2$ without outneighbour.
\begin{itemize}
\item[$\bullet$]If $|X|=1$, we have $X\subseteq V_4$ since each vertex of $V_5$ has an outneighbour. Deleting $X$ yields a $(2,2,3,3,4)$- or a $(2,3,3,3,4)$-octahedral system without isolated vertex. 
We obtain $\nu(2,3,3,3,4)\geq 6$ since $\nu(2,2,3,3,4)\geq 5$ by Proposition~\ref{prop:nonisolated}, and then $\nu(3,3,3,3,4)\geq 7$ since $\nu(2,3,3,3,4)\geq 6$.
\item[$\bullet$] If $|X|=2$, deleting $X$ yields a $(2,2,3,3,3)$- or a $(2,3,3,3,3)$-octahedral system without isolated vertex. 
Since one additional edge is needed to cover $X$, 
we obtain $\nu(2,3,3,3,4)\geq 6$ since $\nu(2,2,3,3,3)=5$ by Claim~\ref{claim:22333}, and $\nu(3,3,3,3,4)\geq 7$ since $\nu(2,3,3,3,3)=6$ by Claim~\ref{claim:23333}.
\end{itemize}
\qquad\end{proof}

\begin{claim}\label{claim:34}
$\nu(\underbrace{3,\ldots,3}_{z\mbox{ \scriptsize{\rm{times}}}},\underbrace{4,\ldots,4}_{5-z\mbox{ \scriptsize{\rm{times}}}})\geq 11-z$ for $z=1,2,3$.
\end{claim}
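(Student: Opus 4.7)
The plan is to prove the three inequalities by downward induction on $z$, using Claim~\ref{claim:23334} (which yields $\nu(3,3,3,3,4)\geq 7 = 11-4$) as the base case $z=4$. At each step I consider an octahedral system $\Omega=(V_1,\ldots,V_5,E)$ without isolated vertex with $|V_1|=\cdots=|V_z|=3$ and $|V_{z+1}|=\cdots=|V_5|=4$, and run the case analysis of Claim~\ref{claim:23334} with $k=4$ on the structure of $D(\Omega)$ as seen from $V_5$.

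If some vertex $y\in V_5$ has no outneighbour in $D(\Omega)$, then by Lemma~\ref{induc} the hypergraph $\Omega\setminus\{y\}$ is an octahedral system without isolated vertex with parameters $(\underbrace{3,\ldots,3}_{z+1\text{ times}},\underbrace{4,\ldots,4}_{4-z\text{ times}})$, which by the induction hypothesis has at least $10-z$ edges; together with an edge containing $y$ this yields at least $11-z$ edges in $\Omega$. Otherwise, if two vertices $v,v'\in V_5$ have outneighbours in a common $V_{i^*}$ with $i^*<4$, I claim $|V_{i^*}|=3$ is impossible. Indeed, letting $u,u'\in V_{i^*}$ be those outneighbours and $u''$ the remaining vertex of $V_{i^*}$, each of the two vertices $v''\in V_5\setminus\{v,v'\}$ must lie only in edges whose $V_{i^*}$-vertex is $u''$: an edge through $v''$ containing $u$ (resp.\ $u'$) would, by the arc $(v,u)$ (resp.\ $(v',u')$), also contain $v$ (resp.\ $v'$), violating colourfulness. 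Hence $(v'',u'')$ would be an arc for both choices of $v''$, giving $u''$ two distinct inneighbours in $V_5$, contradicting a basic property of $D(\Omega)$. So $|V_{i^*}|=4$, and Lemma~\ref{lem:2in1} with $(k,z)=(4,z)$ delivers at least $|V_{i^*}|(k-1)+|V_4|+|V_5|-2k=12$ edges, which comfortably exceeds $11-z$.

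The remaining case is that each vertex of $V_5$ has an outneighbour yet at most one has an outneighbour in any $V_i$ with $i<4$. Since $|V_5|=4>3=k-1$, some $x\in V_5$ has no outneighbour in $V_1\cup V_2\cup V_3$, and by transitivity of $D(\Omega)$ there is a complete subgraph $X\subseteq V_4\cup V_5$ without outneighbour of cardinality $1$ or $2$. If $|X|=2$, Lemma~\ref{lem:1in1} with $(k,z)=(4,z)$ yields at least $(k-1)^2+|V_4|+|V_5|-2k+1=10$ edges, which suffices for each $z\in\{1,2,3\}$. If $|X|=1$, then necessarily $X\subseteq V_4$ (the alternative $X\subseteq V_5$ would contradict the current assumption that every vertex of $V_5$ has an outneighbour), and deleting $X$ via Lemma~\ref{induc} reduces the problem to an octahedral system on parameters $(\underbrace{3,\ldots,3}_{z+1\text{ times}},\underbrace{4,\ldots,4}_{4-z\text{ times}})$, allowing us to close the induction exactly as in the first case.

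The step I expect to require the most care is the impossibility argument for $|V_{i^*}|=3$ in the middle case: a direct application of Lemma~\ref{lem:2in1} would give only $|V_{i^*}|(k-1)+|V_4|+|V_5|-2k=9$ edges, which is insufficient when $z=1$, so this configuration must instead be ruled out structurally using the fact that a vertex of $D(\Omega)$ has at most one inneighbour in each colour class.
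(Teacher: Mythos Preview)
Your overall induction scheme and the handling of Cases~1 and~3 are fine and match the paper's approach. The problem is the structural impossibility argument in Case~2 for $|V_{i^*}|=3$: it is simply wrong.

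From the arcs $(v,u)$ and $(v',u')$ you correctly deduce that every edge through any $v''\in V_5\setminus\{v,v'\}$ must meet $V_{i^*}$ in $u''$. But by the paper's definition, ``$(a,b)$ is an arc'' means that every edge containing $b$ contains $a$; what you have shown is that every edge containing $v''$ contains $u''$, i.e.\ that $(u'',v'')$ is an arc, not $(v'',u'')$. Thus $u''$ is an \emph{inneighbour} of each $v''$, equivalently each $v''$ is an \emph{outneighbour} of $u''$. The basic property of $D(\Omega)$ forbids a vertex from having two inneighbours in the same $V_i$; it says nothing about having two outneighbours there, and indeed a single vertex can dominate several vertices of one class. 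So there is no contradiction, and the configuration $|V_{i^*}|=3$ cannot be ruled out this way. Your own remark at the end is then fatal: with $|V_{i^*}|=3$, Lemma~\ref{lem:2in1} yields only $9$ edges, which falls short of $11-z=10$ when $z=1$.

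The paper does not try to exclude $|V_{i^*}|=3$; it treats this case head-on. Restricting Case~(a) to $i^*\le z$ (so that $|V_{i^*}|=3$ automatically), it runs the degree argument of Lemma~\ref{lem:2in1} inside a set $W$ with $|W_i|=3$ for $i\le 4$ and $|W_5|=4$, obtaining that each of the three vertices of $V_{i^*}$ has degree at least $3$ in the hypergraph induced by $W$ (nine edges), and then observes that $V_4\setminus W_4\neq\emptyset$ forces one further edge outside $W$, for a total of $10\ge 11-z$. You could equally well keep your case split with $i^*<4$ and, when $|V_{i^*}|=3$, replace the impossibility claim by this direct count.
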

\begin{proof}
The proof works by induction on $z$ using the inequality $\nu(3,3,3,3,4)\geq 7$ which holds by Claim~\ref{claim:23334}.
We consider  the two possible cases for the associated $D(\Omega)$.\\

\noindent 
{\em Case $(a)$:} there are at least two vertices $v$ and $v'$ of $V_5$ having outneighbours in the same $V_{i^*}$ with $i^*\leq z$. Let $u$ and $u'$ be the two vertices in $V_{i^*}$ with $(v,u)$ and $(v',u')$ forming arcs in $D(\Omega)$. For each vertex $w\in V_{i^*}$, choose an edge $e(w)$ containing $w$. Choose $W_i\subseteq V_i$ such that $|W_i|=3$ for $i=1,\ldots,4$, $|W_5|=4$, and $$\bigcup_{w\in V_{i^*}} e(w) \subseteq W=\bigcup_{i=1}^{5} W_i.$$ 
 \noindent
Pick $w\in V_{i^*}$ and consider $e(w)$. If $v\in e(w)$, take $2$ disjoint $i^*$-transversals in $W$ not containing $v'$ and not intersecting with $e(w)$.  Applying the parity property to $e(w),u'$, and each of those $i^*$-transversals yields, in addition to $e(w)$, at least $2$ edges containing $w$. Otherwise, take $2$ disjoint $i^*$-transversals in $W$ not containing $v$ and not intersecting with $e(w)$, and apply the parity property to $e(w),u$, and each of those $i^*$-transversals. In both cases, the degree of $w$ in the hypergraph induced by $W$ is at least $3$. Then, we add edges not contained in $W$. Since $V_4\setminus W\neq\emptyset$, there is at least one additional edge. In total, we have at least $10\geq 11-z$ edges.\\

\noindent
{\em Case $(b)$:} there is at most one vertex of $V_5$ having an outneighbour in $V_i$ for $i\leq z$. Since $|V_5|=4$, there is at least one vertex of $V_5$ having no outneighbour in $\bigcup_{i=1}^zV_i$.  Thus, there is a subset $X\subseteq\bigcup_{i=z+1}^5V_i$ inducing in $D(\Omega)$ a complete subgraph without outneighbour. If $|X|=1$, deleting $X$ yields a $(\underbrace{3,\ldots,3}_{z+1\mbox{ \scriptsize{times}}},\underbrace{4,\ldots,4}_{4-z\mbox{ \scriptsize{times}}})$-octahedral system without isolated vertex. As $\nu(\underbrace{3,\ldots,3}_{z+1\mbox{ \scriptsize{times}}},\underbrace{4,\ldots,4}_{4-z\mbox{ \scriptsize{times}}})\geq 11-(z+1)$ we obtain $11-z$ edges. 
If $|X|\geq 2$, we have at least $9+2=11$ edges by Lemma~\ref{tech} with $(k,z)=(4,z)$.
\qquad\end{proof}

\begin{claim}\label{claim:44444}
$\nu(4,4,4,4,4)=12$.
\end{claim}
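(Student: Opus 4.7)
The plan is two-step: first establish $\nu(4,4,4,4,4) \geq 11$ via a case analysis on $D(\Omega)$ that closely mirrors the proof of Claim~\ref{claim:34}, and then invoke Proposition~\ref{parity} to boost this to $|E| \geq 12$, since all $|V_i|=4$ are even so $|E|$ must be even. The matching upper bound $\nu(4,4,4,4,4) \leq 12$ is immediate from Proposition~\ref{prop:upperbound_gen} as $2 + 5(4-2) = 12$.

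For the lower bound, let $\Omega = (V_1,\ldots,V_5,E)$ be a $(4,4,4,4,4)$-octahedral system without isolated vertex, and split on $D(\Omega)$. In Case $(a)$, if two vertices $v, v' \in V_5$ have outneighbours in a common $V_{i^*}$ with $i^* < 4$, then Lemma~\ref{lem:2in1} with $(k,z) = (4,0)$ directly yields $|E| \geq |V_{i^*}|(k-1) + |V_{n-1}| + |V_n| - 2k = 12 + 4 + 4 - 8 = 12$, which in fact closes the argument on its own. In Case $(b)$, for each $i \in \{1,2,3\}$ at most one vertex of $V_5$ has an outneighbour in $V_i$; since $|V_5| = 4$, some $v \in V_5$ has no outneighbour in $V_1 \cup V_2 \cup V_3$. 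Transitivity of $D(\Omega)$ forces every vertex reachable from $v$ to lie in $V_4 \cup V_5$, so one extracts a sink complete subgraph $X \subseteq V_4 \cup V_5$ of $D(\Omega)$.

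If $|X| \geq 2$, Lemma~\ref{tech} with $(k,z) = (4,0)$ applies (the $(2,2,3,3)$ exclusion is not triggered since $n = 5$), giving $|E| \geq (k-1)^2 + 2 = 11$. If $|X| = 1$, Lemma~\ref{induc} gives that $\Omega \setminus X$ is an octahedral system without isolated vertex of type $(3,4,4,4,4)$ up to permutation of the parts; then Claim~\ref{claim:34} with $z = 1$ yields $|E(\Omega \setminus X)| \geq 10$, and since the deleted vertex lies in at least one edge of $\Omega$, $|E| \geq 11$. In every case $|E| \geq 11$, and the parity argument then promotes this to $|E| \geq 12$.

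The main obstacle, and the reason the tight value requires more than just a direct counting argument, is the mismatch between the natural output of the available counting tools, which is the odd quantity $(k-1)^2 + 2 = 11$ produced by Lemma~\ref{tech}, and the desired bound $12$. The elegant resolution is to exploit evenness of $|E|$ coming from Proposition~\ref{parity} rather than trying to sharpen the combinatorial lemmas; this is also why the proposition is emphasized earlier. Everything else follows the same template as the inductive proofs of Claims~\ref{claim:33333} and~\ref{claim:34}, so the bulk of the work is really concentrated in correctly chaining Case $(a)$, Lemma~\ref{tech}, and the base case $\nu(3,4,4,4,4) \geq 10$.
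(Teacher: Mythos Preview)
Your proof is correct and follows essentially the same approach as the paper: locate a sink complete subgraph $X$ in $D(\Omega)$, branch on $|X|$ using Lemma~\ref{tech} or deletion plus Claim~\ref{claim:34} to get $|E|\geq 11$, then invoke Proposition~\ref{parity} to promote $11$ to $12$, with the upper bound from Proposition~\ref{prop:upperbound_gen}. The only difference is that your Case~(a)/(b) split via Lemma~\ref{lem:2in1} is unnecessary here, since with $z=0$ the hypothesis of Lemma~\ref{tech} allows $X\subseteq\bigcup_{i=1}^5 V_i$ with no locational restriction, so the paper simply takes any sink $X$ and branches on $|X|$ directly.
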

\begin{proof} 
There is a subset $X$ inducing a complete subgraph in $D(\Omega)$ without outneighbour. If $|X|=1$, deleting $X$ yields a $(3,4,\ldots,4)$-octahedral system without isolated vertex.  As $\nu(3,4,\ldots,4)\geq 10$, we obtain $11$ edges. If $|X|\geq 2$, we have at least $11$ edges by Lemma~\ref{tech} with $(k,z)=(4,0)$. Thus,  $\nu(4,4,4,4,4)\geq 12$ by Proposition~\ref{parity}, and then $\nu(4,4,4,4,4)=12$ by Proposition~\ref{prop:upperbound_gen}.
\qquad\end{proof}
\begin{claim}\label{claim:xx}
$\nu(\underbrace{4,\ldots ,4}_{z\mbox{ \scriptsize{\rm times}}},\underbrace{5,\ldots ,5}_{5-z\mbox{ \scriptsize{\rm times}}} )=17-z$ for $z=1,2,3,4$.
\end{claim}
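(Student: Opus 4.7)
The plan is to imitate the proof of Claim~\ref{claim:34}: induct on $z$ from $z=4$ down to $z=1$, using $\nu(4,4,4,4,4)=12$ from Claim~\ref{claim:44444} as the base, and split the analysis of $D(\Omega)$ into the two now-familiar cases. The matching upper bound $\nu\leq 17-z$ is immediate from Proposition~\ref{prop:upperbound_gen} since $2+2z+3(5-z)=17-z$, so only the lower bound requires work.

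Let $\Omega=(V_1,\ldots,V_5,E)$ be a $(\underbrace{4,\ldots,4}_{z\mbox{ \scriptsize{\rm times}}},\underbrace{5,\ldots,5}_{5-z\mbox{ \scriptsize{\rm times}}})$-octahedral system without isolated vertex. In Case (a), two vertices of $V_5$ have outneighbours in the same $V_{i^*}$ for some $i^*\leq z$; Lemma~\ref{lem:2in1} applied with $(k,z)=(5,z)$ yields at least $|V_{i^*}|(k-1)+|V_{n-1}|+|V_n|-2k$ edges, which evaluates to $15$ when $z=4$ (since then $|V_4|=4$) and to $16$ when $z\leq 3$, in both regimes at least $17-z$. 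In Case (b), each $V_i$ with $i\leq z$ receives outgoing arcs in $D(\Omega)$ from at most one vertex of $V_5$; since $|V_5|=5>z$, at least one vertex of $V_5$ has no outneighbour in $\bigcup_{i=1}^z V_i$, and by transitivity of $D(\Omega)$ one reaches from such a vertex a set $X\subseteq\bigcup_{i=z+1}^5 V_i$ inducing a complete subgraph of $D(\Omega)$ without outneighbour.

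When $|X|\geq 2$, Lemma~\ref{tech} applied with $(k,z)=(5,z)$ gives $(5-1)^2+2=18\geq 17-z$ edges, the $(2,2,3,3)$ exception being irrelevant here. When $|X|=1$, Lemma~\ref{induc} guarantees that $\Omega\setminus X$ is an octahedral system without isolated vertex having $z+1$ vertex classes of size $4$ and $4-z$ of size $5$, so at least $17-(z+1)$ edges by the inductive hypothesis (or by Claim~\ref{claim:44444} when $z=4$); adding the edge containing $X$ gives at least $17-z$. I do not anticipate a serious obstacle: the arithmetic is tight only in Case (a) at $z=1$, where $16\geq 16$, and all the technical machinery from Section~\ref{proofs} has been calibrated for precisely this inductive pattern.
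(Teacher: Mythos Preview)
Your proposal is correct and follows essentially the same approach as the paper: downward induction on $z$ from the base $\nu(4,4,4,4,4)=12$, with Case~(a) handled by Lemma~\ref{lem:2in1} at $(k,z)=(5,z)$ and Case~(b) split into $|X|\geq 2$ (Lemma~\ref{tech}, giving $18$ edges) and $|X|=1$ (induction via Lemma~\ref{induc}). Your arithmetic in Case~(a), distinguishing $z=4$ from $z\leq 3$, is slightly more explicit than the paper's but leads to the same conclusion.
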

\begin{proof}
The proof works by induction on $z$ using the inequality $\nu(4,4,4,4,4)\geq 12$ which holds by Claim~\ref{claim:44444}. 
We consider the two possible cases for the associated $D(\Omega)$.\\

\noindent
{\em Case $(a)$:} there are at least two vertices $v$ and $v'$ of $V_5$ having outneighbours in the same $V_{i^*}$ with $i^*\leq z$.
We can apply Lemma~\ref{lem:2in1} with $(k,z)=(5,z)$, we have at least $4\times 4 + |V_4| +|V_5|-10\geq 17-z$ edges.\\

\noindent
{\em Case $(b)$:} there is at most one vertex of $V_5$ having an outneighbour in $V_i$ for $1\leq i\leq z$. Since $|V_5|=5$, there is a vertex of $V_5$ having no outneighbour in $\bigcup_{i=1}^zV_i$. Thus, there is a subset $X\subseteq \bigcup_{i=z+1}^5V_i$ inducing in $D(\Omega)$ a complete subgraph without outneighbour.
If $|X|=1$, deleting $X$ yields a $(\underbrace{4,\ldots ,4}_{z+1 \mbox{ \scriptsize times }},\underbrace{5,\ldots ,5}_{4-z \mbox{ \scriptsize times }} )$-octahedral system. As $\nu(\underbrace{4,\ldots ,4}_{z+1 \mbox{ \scriptsize times }},\underbrace{5,\ldots ,5}_{4-z \mbox{ \scriptsize times }} )\geq 16-z$, we obtain at least $17-z$ edges.
If $|X|\geq 2$, we have at least $18$ edges by Lemma~\ref{tech}.\\

\noindent
Thus, the equality holds since $\nu(\underbrace{4,\ldots ,4}_{z \mbox{ \scriptsize times }},\underbrace{5,\ldots ,5}_{5-z \mbox{ \scriptsize times }} )\leq 17-z$ by Proposition~\ref{prop:upperbound_gen}.
\qquad\end{proof}

\begin{claim}\label{claim:55555}
$\nu(5,5,5,5,5)=17$.
\end{claim}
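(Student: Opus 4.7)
The plan is to mirror the inductive pattern already used in Claims~\ref{claim:33333}, \ref{claim:44444}, and~\ref{claim:xx}: combine the upper bound from Proposition~\ref{prop:upperbound_gen} with a case analysis on the structure of $D(\Omega)$. Since Proposition~\ref{prop:upperbound_gen} gives $\nu(5,5,5,5,5)\leq 2+5(5-2)=17$, only the lower bound $\nu(5,5,5,5,5)\geq 17$ needs to be established.

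Let $\Omega=(V_1,\ldots,V_5,E)$ be a $(5,5,5,5,5)$-octahedral system without isolated vertex. As noted in the preliminaries, because $D(\Omega)$ is transitive there is always a nonempty subset $X$ of $\bigcup_{i=1}^5V_i$ inducing a complete subgraph of $D(\Omega)$ without outneighbour. The plan is to split into two cases according to $|X|$.

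If $|X|=1$, then by Lemma~\ref{induc} the hypergraph $\Omega\setminus X$ is an octahedral system without isolated vertex, and since $X$ consists of a single vertex, it is a $(4,5,5,5,5)$-octahedral system. Applying Claim~\ref{claim:xx} with $z=1$ gives $|E(\Omega\setminus X)|\geq 16$, and the (at least one) edge of $\Omega$ containing $X$ brings the total to at least $17$. If $|X|\geq 2$, then Lemma~\ref{tech} with parameters $(k,z)=(5,0)$ applies directly (we are in the range $3\leq k\leq m_{k+1}\leq\cdots\leq m_n$ with $n=k=5$), and yields at least $(k-1)^2+2=18$ edges, which is more than enough.

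Combining the two cases gives $\nu(5,5,5,5,5)\geq 17$, and together with the upper bound of Proposition~\ref{prop:upperbound_gen} the equality $\nu(5,5,5,5,5)=17$ follows. The only subtlety — and the one potential obstacle — is to check that Claim~\ref{claim:xx} really covers the case $z=1$ used here; this is explicitly included in its statement, so no additional argument is required. No heavy computation is needed, as all the technical work has been done in the preceding claims and lemmas.
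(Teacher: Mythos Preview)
Your proof is correct and follows essentially the same approach as the paper's own proof: both use the existence of a complete subgraph $X$ of $D(\Omega)$ without outneighbour, apply Claim~\ref{claim:xx} (with $z=1$) after deleting $X$ when $|X|=1$, apply Lemma~\ref{tech} with $(k,z)=(5,0)$ when $|X|\geq 2$, and conclude equality via the upper bound of Proposition~\ref{prop:upperbound_gen}. Your write-up is slightly more explicit about invoking Lemma~\ref{induc} and checking the parameter ranges, but the argument is the same.
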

\begin{proof}
There is a subset $X$ inducing a complete subgraph in $D(\Omega)$ without outneighbour. If $|X|=1$, deleting $X$ yields a $(4,5,5,5,5)$-octahedral system without isolated vertex. As $\nu(4,5,5,5,5)\geq 16$, we have at least $17$ edges. If $|X|\geq 2$, we can apply Lemma~\ref{tech}, and we have  at least $18$ edges.\\

\noindent
Thus, the equality holds since $\nu(5,5,5,5,5)\leq 17$ by Proposition~\ref{prop:upperbound_gen}.
\qquad\end{proof}

\noindent
As $\nu(5,5,5,5,5)=\nu(4)$, Claim~\ref{claim:55555} and the relation $\mu(4)\geq\nu(4)$  directly imply that the conjectured equality $\mu(d)=d^2+1$ holds for $d=4$.
\begin{proposition}\label{prop:mu4}
$\mu(4)=17$.
\end{proposition}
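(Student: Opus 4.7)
The plan is to read off Proposition~\ref{prop:mu4} as an immediate consequence of Claim~\ref{claim:55555} together with two facts already recalled in the introduction: the inequality $\mu(d)\geq\nu(d)$ and the upper bound $\mu(d)\leq d^2+1$ proven in~\cite{DHST06}. Concretely, I would first invoke the identification $\nu(4)=\nu(\underbrace{5,\ldots,5}_{5\text{ times}})$ introduced as an abuse of notation in Section~1, so that Claim~\ref{claim:55555} reads $\nu(4)=17$ verbatim. The lower bound $\mu(4)\geq\nu(4)=17$ then follows from the passage-to-octahedral-systems argument sketched in the introduction: any colourful configuration $\mathbf{S}_1,\ldots,\mathbf{S}_5$ realizing $\mu(4)$ yields, by the Octahedron Lemma, an octahedral system on $V_i=\mathbf{S}_i$ whose edges are the colourful simplices containing $\zero$, and B\'ar\'any's strengthened colourful Carath\'eodory theorem guarantees that every $\mathbf{S}_i$-point is a vertex of some such simplex, so the system has no isolated vertex and is counted by $\nu(4)$.

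For the matching upper bound I would simply cite $\mu(d)\leq d^2+1$ from~\cite{DHST06}, which at $d=4$ gives $\mu(4)\leq 17$. Combining the two inequalities yields the equality $\mu(4)=17$.

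At this point there is genuinely no obstacle remaining: the difficulty has been entirely absorbed into the combinatorial machinery of Sections~\ref{OSprop} and~\ref{mu4}, where Lemmas~\ref{induc}--\ref{lem:2in1} and the cascading Claims~\ref{claim:22333}--\ref{claim:55555} do all the work. The only mild point of care in writing up the present proof is to make the logical chain transparent for a reader who may have skipped the combinatorial details, namely that the ``without isolated vertex'' hypothesis required to apply Claim~\ref{claim:55555} is precisely what the colourful Carath\'eodory theorem supplies for any configuration realizing $\mu(4)$, so the passage $\mu(4)\geq\nu(4)$ is justified and the equality follows in one line.
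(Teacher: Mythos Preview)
Your proposal is correct and matches the paper's own argument essentially verbatim: the paper likewise derives $\mu(4)=17$ in one line by combining $\nu(4)=\nu(5,5,5,5,5)=17$ from Claim~\ref{claim:55555} with $\mu(4)\geq\nu(4)$ and the upper bound $\mu(4)\leq d^2+1$ from~\cite{DHST06}. Your added sentence making explicit why the associated octahedral system has no isolated vertex is a helpful clarification but not a departure from the paper's approach.
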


\bibliographystyle{siam}
\bibliography{refs}

\begin{thebibliography}{10}

\bibitem{AB+09}
{\sc Jorge~L. Arocha, Imre B{\'a}r{\'a}ny, Javier Bracho, Ruy Fabila, and Luis
  Montejano}, {\em Very colorful theorems}, Discrete and Computational
  Geometry, 42 (2009), pp.~142--154.

\bibitem{Bar82}
{\sc Imre B{\'a}r{\'a}ny}, {\em A generalization of {C}arath\'eodory's
  theorem}, Discrete Mathematics, 40 (1982), pp.~141--152.

\bibitem{BM06}
{\sc Imre B{\'a}r{\'a}ny and Ji{\v r}{\' i} Matou{\v s}ek}, {\em Quadratically
  many colorful simplices}, SIAM Journal on Discrete Mathematics, 21 (2007),
  pp.~191--198.

\bibitem{BO97}
{\sc Imre B{\'a}r{\'a}ny and Shmuel Onn}, {\em Colourful linear programming and
  its relatives}, Mathematics of Operations Research, 22 (1997), pp.~550--567.

\bibitem{CDSX11}
{\sc Grant Custard, Antoine Deza, Tamon Stephen, and Feng Xie}, {\em Small
  octahedral systems}, in Proceedings of the 23rd Canadian Conference on
  Computational Geometry (CCCG'11), 2011.

\bibitem{DHST06}
{\sc Antoine Deza, Sui Huang, Tamon Stephen, and Tam{\' a}s Terlaky}, {\em
  Colourful simplicial depth}, Discrete and Computational Geometry, 35 (2006),
  pp.~597--604.

\bibitem{DHST08}
\leavevmode\vrule height 2pt depth -1.6pt width 23pt, {\em The colourful
  feasibility problem}, Discrete Applied Mathematics, 156 (2008),
  pp.~2166--2177.

\bibitem{DSX11}
{\sc Antoine Deza, Tamon Stephen, and Feng Xie}, {\em More colourful
  simplices}, Discrete and Computational Geometry, 45 (2011), pp.~272--278.

\bibitem{DSX12}
\leavevmode\vrule height 2pt depth -1.6pt width 23pt, {\em Computational lower
  bounds for colourful simplicial depth}, Symmetry, 5 (2013), pp.~47--53.

\bibitem{Gro10}
{\sc Mikhail Gromov}, {\em Singularities, expanders and topology of maps.
  {P}art 2: from combinatorics to topology via algebraic isoperimetry}, Geom.
  Funct. Anal., 20 (2010), pp.~416--526.

\bibitem{HPT08}
{\sc Andreas~F. Holmsen, J{\'a}nos Pach, and Helge Tverberg}, {\em Points
  surrounding the origin}, Combinatorica, 28 (2008), pp.~633--644.

\bibitem{Kar12b}
{\sc Roman Karasev}.
\newblock Personal communication, 2012.

\bibitem{Kar12}
\leavevmode\vrule height 2pt depth -1.6pt width 23pt, {\em A simpler proof of
  the {B}oros-{F}{\"u}redi-{B}{\'a}r{\'a}ny-{P}ach-{G}romov theorem}, Discrete
  Comput.~Geom., 47 (2012), pp.~492--495.

\bibitem{MD12}
{\sc Fr{\'e}d{\'e}ric Meunier and Antoine Deza}, {\em A further generalization
  of the colourful {C}arath{\'e}odory theorem}, Discrete Geometry and
  Optimization, Fields Institute Communications, 69 (2013).

\bibitem{ST06}
{\sc Tamon Stephen and Hugh Thomas}, {\em A quadratic lower bound for colourful
  simplicial depth}, Journal of Combinatorial Optimization, 16 (2008),
  pp.~324--327.

\end{thebibliography}

\end{document}